\documentclass[12pt]{amsart}

\theoremstyle{definition}
\theoremstyle{plain}

\usepackage[left=2.8cm,top=2.8cm,right=2.8cm,bottom = 2.4cm]{geometry}
\usepackage{amsfonts}  
\usepackage{amsmath}
\usepackage{amscd}
\usepackage{amsrefs}
\usepackage{amssymb} 
\usepackage{amsthm} 
\usepackage{bbm}
\usepackage{bm}
\usepackage{enumerate}
\usepackage{enumitem}
\usepackage{hyperref}
\usepackage{latexsym}
\usepackage{mathdots}
\usepackage{mathtools}
\usepackage{xfrac}
\usepackage{array}

\newtheorem{lemma}{Lemma}[section]

\newtheorem{remark}[lemma]{Remark}
\newtheorem{theorem}[lemma]{Theorem}
\newtheorem{corollary}[lemma]{Corollary}
\theoremstyle{definition}

\newcommand{\CDOT}{\vcenter{\hbox{\scalebox{2}{$\cdot$}}}}

\newcommand{\C}{\mathbb{C}}
\newcommand{\Chat}{\widehat{\C}}

\newcommand{\FWC}{\mathfrak{C}}
\newcommand{\FWCbar}{\overline{\mathfrak{C}}}
\newcommand{\R}{\mathbb{R}}

\DeclareMathOperator{\ad}{ad}

\DeclareMathOperator{\End}{End}
\DeclareMathOperator{\id}{id}
\DeclareMathOperator{\Res}{Res}
\DeclareMathOperator{\sgn}{sgn}

\DeclareMathOperator{\Tr}{Tr}
\DeclareMathOperator{\Z}{\mathbb{Z}}
\DeclareSymbolFont{bbold}{U}{bbold}{m}{n}
\DeclareSymbolFontAlphabet{\mathbbold}{bbold}
\DeclareMathSymbol{\shortminus}{\mathbin}{AMSa}{"39}

\title{Generating Functions for Casimir Invariants of Simple Lie Algebras}
\begin{document}
	\author[Michael Flattery]{Michael Flattery$^{\dagger}$}
\thanks{$^{\dagger}$Supported by a University of Galway College of Science and Engineering Scholarship}
%\address{School of Mathematical and Statistical Sciences\\ 	University of Galway, Galway H91 TK33, Ireland}
\email{flatterymichael@gmail.com}
	\author{Michael P. Tuite}
%	\thanks{$^{*}$Corresponding author.}
	\email{michael.tuite@universityofgalway.ie}
	\address{School of Mathematical and Statistical Sciences\\ 
		University of Galway, Galway H91 TK33, Ireland}

\begin{abstract}
	We consider generalised Casimir operators associated with irreducible representations for finite-dimensional, semisimple, complex Lie algebras. Using Klimyk's formula and Okubo's formula, we derive a rational ordinary generating function for the Casimir operator eigenvalues for simple Lie algebras which does not rely on knowledge of the Weyl group. We also directly reproduce generating function expressions for the classical Lie algebras originally due to Perelomov and Popov.
\end{abstract}

\maketitle

\section{Introduction}
Let $\mathfrak{g}$ be a rank $n$ finite-dimensional semisimple complex Lie algebra and $U(\mathfrak{g})$ the corresponding  universal enveloping algebra  e.g. \cites{Hu1, Kn}. 
%We can construct the which inherits algebraic information about $\mathfrak{g}$ \cites{Hu1, Kn}. 
The central elements $Z(U(\mathfrak{g}))$ are determined by $n$ algebraically independent invariants \cites{GOR, Hu2, Ra}. This paper specifically focuses on the invariants given by generalised Casimir operators associated with a given $\mathfrak{g}$-irreducible representation  \cite{Ok}. For $\mathfrak{g}$ simple, it is known that $Z(U(\mathfrak{g}))$ is generated by $n$ specific Casimir operators associated with the minimal irreducible representation of $\mathfrak{g}$ except for Lie algebras of type $D_{n}$ where at least one of the algebraically independent set must be associated with a half-spinor representation. In this paper we describe a novel approach to calculating the eigenvalue of a generalised Casimir operator acting on an irreducible representation. We utilise Klimyk's formula \cites{Kl, Sn} which determines the irreducible decomposition
of the tensor product of two irreducible representations and Okubo's formula \cite{Ok} for Casimir eigenvalues which relies on such a decomposition. Employing an extension of the Weyl dimension formula to any integral weight, we derive an ordinary rational generating formula for Casimir eigenvalues which does not require knowledge of the Weyl group. We directly derive closed form generating function expressions for the classical Lie algebras, originally due to Perelomov and Popov \cite{PP}.

In Section~\ref{sec:Prelim} we begin with some definitions and results concerning the Lie algebra $\mathfrak{g}$. We discuss both the standard and dot (or affine) action of the Weyl group on the dual Cartan space $\mathfrak{h}^{*}$. We define the generalised integer index $p\in \Z_{\ge 0}$ Casimir operator, denoted by $K_{p}^{\kappa}$, associated with an irreducible representation $V(\kappa)$  for a given integral highest weight vector $\kappa$. We also specify sets of generalised Casimir operators which generate $Z(U(\mathfrak{g}))$ for simple $\mathfrak{g}$. We also introduce Dynkin indices to explicitly describe the Casimir  operators with respect to our choice of normalisation. 

In Section~\ref{sec:KlimOk} we consider the Okubo and Klimyk formulas. By Schur's Lemma, any element of $Z(U(\mathfrak{g}))$ acts on an irreducible representation as scalar multiplication by an eigenvalue.  Okubo's formula \cite{Ok} provides a means of calculating the eigenvalue, we denote by $c_{p}^{\kappa}(\lambda)$, of the action of the generalised Casimir operator $K_{p}^{\kappa}$ on an irreducible representation $V(\lambda)$ for a given integral highest weight vector $\lambda$.  
Okubo's  formula requires knowledge of the irreducible decomposition of $V(\lambda)\otimes V(\kappa)$. Klimyk's formula \cites{Kl, Sn}, which employs the Weyl group dot  action, gives us one method for determining this  decomposition.
We also discuss the properties of the eigenvalue $\xi_{\lambda,\kappa}(\nu)$ of an operator $Q_{\lambda,\kappa}$ acting on an irreducible representation $V(\nu)$ that arises in proving Okubo's formula. This eigenvalue  proves to have an essential role in Section~\ref{sec:GenFun}.

In Section~\ref{sec:GenFun} we show  how to compute the eigenvalue  $c_{p}^{\kappa}(\lambda)$ for the $K_{p}^{\kappa}$ action on $V(\lambda)$ based only on knowledge of the positive roots of $\mathfrak{g}$, $V(\kappa)$ weights and  the highest weight $\lambda$. 
We begin the section with a discussion of  natural extensions of the Weyl dimension formula to a function $D(\tau)$ and the $Q_{\lambda,\kappa}$ eigenvalues to  $\xi_{\lambda,\kappa}(\tau)$ for all $\tau\in\mathfrak{h}^{*}$. 
Our main result, Theorem~\ref{theor:CasGen}, gives a rational expression for the generating function $\sum_{p\ge 0}c_{p}^{\kappa}(\lambda)z^{p}$ expressed as a sum over $\mu\in V(\kappa)$ where the summand depends on $D(\lambda+\mu)$ and $\xi_{\lambda,\kappa}(\lambda+\mu)$. This bypasses a significant computational barrier in the Okubo formula calculation of Casimir eigenvalues since we are not required to use the Weyl group or the irreducible decomposition of $V(\lambda)\otimes V(\kappa)$. We directly derive closed form generating function expressions, originally due to Perelomov and Popov \cite{PP}, for the classical Lie algebras for eigenvalues for the generalised Casimir operators associated with the first fundamental irreducible representation acting on $V(\lambda)$.

Section~\ref{sec:Conc} concludes with some general remarks.

\section{Preliminaries}\label{sec:Prelim}
We begin with a review of the background and conventions used in this paper to avoid ambiguity. We freely use the definitions of Humphreys \cites{Hu1,Hu2} and of Knapp \cite{Kn}, though some distinct notation and shorthand expressions are introduced. Any further background from other sources will be specified as such. Throughout, we assume $\mathfrak{g}$ is a semisimple  Lie algebra over $\C$ of finite dimension $d$ and rank $n$. Where we require simplicity it is explicitly noted.

Let $\Delta$ be a system of roots for $\mathfrak{g}$ relative to the Cartan subalgebra $\mathfrak{h}$ and then let $\Delta^{+}$ be a choice of positive roots and $\alpha_{1},\ldots,\alpha_{n}$ be the induced simple roots for a given choice of labeling. Let $(\cdot,\cdot)$ denote a normalised Killing form on $\mathfrak{g}$ and, for $\mu\in\mathfrak{h}^{*}$, let $H_{\mu}$ be the unique Cartan element such that we have $(H_{\mu},H)=\mu(H)$ for any $H\in\mathfrak{h}$. Define a symmetric bilinear form $\langle\mu,\nu\rangle:=(H_{\mu},H_{\nu})$ for any $\mu,\nu\in\mathfrak{h}^{*}$. Define $\Vert\mu\Vert:=\sqrt{\langle\mu,\mu\rangle}$ for any choice of $\mu\in\mathfrak{h}^{*}$ and note that the normalisation of $(\cdot,\cdot)$ is chosen so that  $\Vert\alpha\Vert=\sqrt{2}$ if $\alpha\in\Delta$ is a long root.

Let $\varpi_{1},\ldots,\varpi_{n}$ be the fundamental weights relative to $\alpha_{1},\ldots,\alpha_{n}$. For $\mathfrak{g}$ simple, we choose our ordering so that $\varpi_{1}$ is the highest weight of a minimal dimension irreducible representation. We define $\mu\in\mathfrak{h}^{*}$ to be dominant if $\langle\mu,\varpi_{i}\rangle\geq0$ for each $1\leq i\leq n$, that is, if $\mu$ has non-negative inner product with each fundamental weight. Following Humphreys \cite{Hu1}, we will call $\mu$ strongly dominant (also referred to as regular) if each of these inequalities are instead strict inequalities. We also introduce the term weakly dominant to refer to any such $\mu$ that is dominant but not strongly dominant, that is, $\mu$ is weakly dominant if it is dominant and $\langle\mu,\varpi_{i}\rangle=0$ for at least one choice of $1\leq i\leq n$. Let $\FWC:=\mathfrak{C}(\Delta^{+})$ denote the fundamental Weyl chamber consisting of the subset of all strongly dominant elements of $\mathfrak{h}^{*}$. Note that $\FWC$ is an open set with closure $\FWCbar$ consisting of all dominant elements and with boundary $\partial\FWC$ consisting of the weakly dominant elements.

 Define the Weyl vector as $\rho:=\sum_{i=1}^{n}\varpi_{i}$, or equivalently, $\rho=\tfrac{1}{2}\sum_{\alpha\in\Delta^{+}}\alpha$. 
We define the set of integral elements as $\Lambda:=\left\{\sum_{i=1}^{n}k_{i}\varpi_{i}\vert k_{1},\ldots,k_{n}\in\Z\right\}\subset \mathfrak{h}^{*}$ and denote the set of dominant integral elements as $\Lambda^{+}$, that is, $\Lambda^{+}:=\FWCbar\cap\Lambda$. The sets of strongly dominant integral elements  and weakly dominant elements in $\mathfrak{h}^{*}$ can be given explicitly as, respectively, $\Lambda^{+}+\rho=\lbrace\lambda+\rho\,\vert\,\lambda\in\Lambda^{+}\rbrace\subset\FWC$ and $\Lambda^{+}\backslash(\Lambda^{+}+\rho)\subset\partial\FWC$.

For $\mathfrak{g}$ simple, we embed $\mathfrak{h}^{*}$ in $\R^{k}$, for positive integer $k\geq n$, equipped with a Euclidean inner product $u\cdot v$, for $u,v\in\R^{k}$, chosen so that $\mu\cdot\nu = r\langle\mu,\nu\rangle$ for constant $r\in\Z^{+}$ for all choices of $\mu,\nu\in\mathfrak{h}^{*}\hookrightarrow\R^{k}$. We also define $u^{2}:=u\cdot u$ to be the Euclidean squared norm for any choice of $u\in\R^{k}$, so we then have $\mu^{2}=r\Vert\mu\Vert^{2}$ for all $\mu\in\mathfrak{h}^{*}$. For later brevity, we adopt the notation $\mathbbold{1}_{k}:=e_{1}+\cdots+e_{k}$ in $\R^{k}$ where $e_{i}$ is the $i^{\text{th}}$ standard Euclidean basis vector. We choose the embedding of simple Lie algebras as shown in Table~\ref{tab:EucEmbed}. 

\begin{table}[ht!]
	{\setlength{\extrarowheight}{4pt}
		\begin{tabular}%{|c|c|c|c|}
			{|@{\hspace{0.82em}}c@{\hspace{0.82em}}|@{\hspace{0.82em}}l@{\hspace{1.52em}}|@{\hspace{0.82em}}c@{\hspace{0.82em}}|@{\hspace{0.82em}}c@{\hspace{0.82em}}|}\hline
			Type & Simple Roots $\alpha_{1},\ldots,\alpha_{n}$ & $r$ & $k$\\ \hline\hline
			$A_{n}$ & $\alpha_{i} = e_{i}-e_{i+1}, \hspace{1em} 1\leq i\leq n,\;n\ge 1$ & 1 & $n+1$ \\ \hline
			$B_{n}$ & $\alpha_{i} = e_{i}-e_{i+1},  \;\alpha_{n}=e_{n},\hspace{1em}1\leq i\leq n-1, \;  n\ge 2$ & 1 & $n$\\ \hline
			$C_{n}$ & $\alpha_{i} = e_{i}-e_{i+1},\; \alpha_{n}=2e_{n},\hspace{1em} 1\leq i\leq n-1, \;n\ge 3$ & 2 & $n$\\ \hline
			$D_{n}$ & $\alpha_{i} = e_{i}-e_{i+1},\; \alpha_{n}=e_{n\shortminus1}+e_{n},\hspace{1em} 1\leq i\leq n-1, \; n\ge 4$ & 1 & $n$\\ \hline
			$E_{6}$ & as with $D_{5}$ with the addition of $\alpha_{6} = -\tfrac{1}{2}\mathbbold{1}_{8}$ & 1 & 8\\ \hline
			$E_{7}$ & as with $D_{6}$ with the addition of $\alpha_{7} = -\tfrac{1}{2}\mathbbold{1}_{8}$ & 1 & 8\\ \hline
			$E_{8}$ & as with $D_{7}$ with the addition of $\alpha_{8} = -\tfrac{1}{2}\mathbbold{1}_{8}$ & 1 & 8\\ \hline
			$F_{4}$ & as with $C_{3}$ with the addition of $\alpha_{4} = -\mathbbold{1}_{4}$ & 2 & 4\\ \hline
			$G_{2}$ & $\alpha_{1} = e_{1}-e_{2}, \; \alpha_{2} = 3e_{2}-\mathbbold{1}_{3}$ & 3 & 3\\ \hline
\end{tabular}}
\caption{Embedding and Labeling of Simple Roots in $\R^{k}$.}
\label{tab:EucEmbed}
\end{table}

Let $W$ be the Weyl group of $\Delta$, generated by the $n$ simple Weyl reflections as in the presentation $W=\left\langle s_{1},\ldots,s_{n}\mid s_{i}^{2}=(s_{i}s_{j})^{k_{i,j}}=1,i\neq j\right\rangle$ where
\begin{align*}
	k_{i,j}:=\left(\tfrac{1}{\pi}\arccos\left(\dfrac{\langle\alpha_{i},\alpha_{j}\rangle}{\Vert\alpha_{i}\Vert\,\Vert\alpha_{j}\Vert}\right)\right)^{-1}\hspace{2em}\text{for }i\neq j.
\end{align*}
We define two actions of $W$ on $\mu\in\mathfrak{h}^{*}$ given for each generator $s_{i}$  by:
\begin{align*}
	s_{i}(\mu)&:=\mu-2\dfrac{\langle\alpha_{i},\mu\rangle}{\langle\alpha_{i},\alpha_{i}\rangle}\alpha_{i},\hspace{2.5em}\text{The Standard Action,}%\hspace{1em}\text{and}
	\\
	s_{i}\CDOT\mu&:=s_{i}(\mu+\rho)-\rho,\hspace{3.42em}
	\text{The Dot (or Affine) Action}.
\end{align*}
It is easy to check that $w\CDOT\mu=w(\mu+\rho)-\rho$ for all $w\in W$.
We say $\mu,\nu\in\mathfrak{h}^{*}$ are linked, denoted $\mu {\sim}\nu$, with linkage $w\in W$ when $\mu=w\CDOT\nu$.
%
%We say $\mu,\nu\in\mathfrak{h}^{*}$ are linked, denoted $\mu\sim\nu$, if $\mu\in W\boldsymbol{\cdot}\nu$ and call $w\in W$ the linkage between $\mu$ and $\nu$ if $\mu=w\boldsymbol{\cdot}\nu$.
% 
This linkage is unique iff $\mu+\rho$ lies in a (open) Weyl chamber iff $\mu\sim\tau$ for some $\tau$ such that $\tau+\rho\in\FWC$ is strongly dominant, that is, for some choice of $\tau\in\FWC-\rho = \left\{\lambda-\rho\hspace{0.5em}\vert\hspace{0.5em}\lambda\in\FWC\right\}$, since the standard action of $W$ on the set of Weyl chambers is transitive.

Define the standard signature $\sgn(w)=(-1)^{s}$ for  $w\in W$, a product of $s$ simple Weyl reflections. We also define a signature function for $\mu\in\mathfrak{h}^{*}$ following \cite{Sn} as:
\begin{align}\label{eq:sgn_hstar}
	\sgn(\mu):=\begin{cases}
		\sgn(w), \hspace{0.5em}\mbox{ when } \mu\sim\nu \mbox{ with unique linkage }w \mbox{ for some }\nu+\rho\in\FWC ,
		\\
		0, \hspace{3em}\mbox{ otherwise}.
	\end{cases}
\end{align}
\\
\begin{remark}\label{rem:SignatureRemarks}
	\begin{enumerate}\leavevmode
		\item[(i)] 
		Note that $\sgn(\mu)\neq0$ iff $\mu+\rho$ lies inside of a Weyl chamber iff $\mu+\rho$ lies in the standard Weyl orbit of a strongly dominant element iff $\mu+\rho$ is linked to an element of $\FWC-\rho$. Conversely, $\sgn(\mu)=0$ iff $\mu+\rho$ lies in the boundary between Weyl chambers iff $\mu+\rho$ is in the standard Weyl orbit of a weakly dominant element iff $\mu$ is linked to an element of $\partial\FWC-\rho$.
		\item[(ii)]  
		For $\mu\in\Lambda$ in particular, since both actions of $W$ preserve the integral property, we have $\sgn(\mu)\neq0$ iff $\mu$ is linked to a dominant element as $\Lambda\cap(\FWC-\rho)=\Lambda^{+}$.
	\end{enumerate}
\end{remark}
Any finite-dimensional representation $V$, with representation map $\sigma_{V}:\mathfrak{g}\rightarrow\End(V)$, is completely reducible with a decomposition that is unique up to reordering. Also, two weights $\mu,\nu\in\mathfrak{h}^{*}$ in the same standard Weyl orbit have the same multiplicity in $V$. We note $V$ extends to a representation of $U(\mathfrak{g})$, the universal enveloping algebra of $\mathfrak{g}$, via the map $\sigma_{V}':U(\mathfrak{g})\rightarrow \End(V)$ given explicitly by $\sigma_{V}'(Y_{1}\otimes\cdots\otimes Y_{l}):=\sigma_{V}(Y_{1})\circ\cdots\circ\sigma_{V}(Y_{l})$ for any
$Y_{1},\ldots, Y_{l}\in\mathfrak{g}$. By an abuse of notation, we also refer to $\sigma_{V}'$ by the notation $\sigma_{V}$.  

For $\lambda\in\Lambda^{+}$, we denote by $V(\lambda)$ the irreducible representation of highest weight $\lambda$ with representation map $\sigma_{\lambda}:\mathfrak{g}\rightarrow\End(V(\lambda))$. We denote by $\Lambda(\lambda)$ the subset of $\Lambda$ of weights appearing in $V(\lambda)$, denote by $m_{\lambda}(\mu)$ the multiplicity of $\mu\in\Lambda$ in the representation $V(\lambda)$ and denote by $n_{V}(\lambda)$ the multiplicity of $V(\lambda)$ in the decomposition of a representation $V$.

The Weyl dimension formula gives $\dim V(\lambda)$ for $\lambda\in\Lambda^{+}$ as a closed expression, given by
\begin{align}\label{DimensionFormula}
	\dim V(\lambda) = \prod_{\alpha\in\Delta^{+}}\dfrac{\langle\lambda+\rho,\alpha\rangle}{\langle\rho,\alpha\rangle}.
\end{align}
Let $\lbrace X_{i}\rbrace_{i=1}^{d}$ be a basis of $\mathfrak{g}$ and  $\lbrace\widetilde{X}_{i}\rbrace_{i=1}^{d}$ the dual basis with respect to the normalised  form $(\cdot,\cdot)$. We define the standard quadratic Casimir operator to be the element of $U(\mathfrak{g})$ given by:
\begin{align}\label{StandardQuadraticCasimir}
	K_{2}:=\sum_{i,j=1}^{d}\left(\widetilde{X}_{i},\widetilde{X}_{j}\right)X_{i}\otimes X_{j}.
\end{align}
It is known that $K_{2}$ is basis-independent and central in $U(\mathfrak{g})$ and so, by Schur's Lemma, the endomorphism $\sigma_{\lambda}(K_{2})$ acts on $V(\lambda)$ by an eigenvalue $c_{2}(\lambda)$ for $\lambda\in\Lambda^{+}$. It is well known that  $c_{2}(\lambda)=\langle\lambda,\lambda+2\rho\rangle$.

Let $p\in \Z^{+}$ and $\kappa\in\Lambda^{+}$. We define a set of multilinear $p$-forms on $\mathfrak{g}$ by
\begin{align}\label{MultilinearForm}
	\left(Y_{1},\ldots,Y_{p}\right)_{\kappa}:=\Tr_{\hspace{0.1em}V(\kappa)}\left(\sigma_{\kappa}\left(Y_{1}\otimes\cdots\otimes Y_{p}\right)\right),
\end{align}
for $Y_{1},\ldots,Y_{p}\in\mathfrak{g}$. For $p=2$, we have $(\cdot,\cdot)_{\kappa}$ defines an invariant, symmetric, bilinear form on $\mathfrak{g}$. Let $\lbrace\widetilde{X}_{i}^{\kappa}\rbrace_{i=1}^{d}$ be the dual basis of $\lbrace X_{i}\rbrace_{i=1}^{d}$ with respect to the bilinear form $(\cdot,\cdot)_{\kappa}$. Following Okubo \cite{Ok}, we define the generalised degree $p\in\Z_{\geq0}$ Casimir operator associated with the representation $V(\kappa)$  by 
\begin{align}\label{GeneralisedCasimir}
	K_{p}^{\kappa}:=
	\begin{cases}
		\dim V(\kappa)\id_{U(\mathfrak{g})}  \mbox{ for } p=0,
		\\
		\sum_{i_{1},\ldots,i_{p}=1}^{d}\left(\widetilde{X}_{i_{1}}^{\kappa},\ldots,\widetilde{X}_{i_{p}}^{\kappa}\right)_{\kappa}X_{i_{1}}\otimes\cdots\otimes X_{i_{p}} \mbox{ for } p\geq 1.
	\end{cases}
\end{align}
$K_{p}^{\kappa}$ is basis-independent and lies in the centre $Z(U(\mathfrak{g}))$ of $U(\mathfrak{g})$. 
In particular, $K_{1}^{\kappa}$ is in the centre of $\mathfrak{g}$ and hence $K_{1}^{\kappa}=0$ since $\mathfrak{g}$ is semisimple. More generally,  $\sigma_{\lambda}(K_{p}^{\kappa})$ acts by an eigenvalue $c_{p}^{\kappa}(\lambda)$ on $V(\lambda)$,  by Schur's Lemma.  

It is known that $n$ distinguished generalised Casimir operators generate  $Z(U(\mathfrak{g}))$ \cites{GOR, Hu2, Ra}.  Table~\ref{tab:ZU_Gens} contains a list of these $n$ generators for the simple Lie algebras \cites{Be, Be2, GOR}. Note that $\varpi_{n-1}$ and $\varpi_{n}$ denote the half-spinor fundamental representations for $D_{n}$.
\begin{table}[h!]
		{\setlength{\extrarowheight}{3pt}\begin{tabular}{|@{\hspace{1em}}c@{\hspace{1em}}|@{\hspace{1em}}l@{\hspace{4em}}|}\hline
		Type & Generators\\ \hline\hline
		$A_{n}$ & $K_{2}^{\varpi_{1}},\hspace{0.35em} K_{3}^{\varpi_{1}},\hspace{0.35em}\ldots,\hspace{0.35em}K_{n+1}^{\varpi_{1}}$\\ \hline
		$B_{n}$ & $K_{2}^{\varpi_{1}},\hspace{0.35em} K_{4}^{\varpi_{1}},\hspace{0.35em}\ldots,\hspace{0.35em}K_{2n}^{\varpi_{1}}$\\ \hline
		$C_{n}$ & $K_{2}^{\varpi_{1}},\hspace{0.35em} K_{4}^{\varpi_{1}},\hspace{0.35em}\ldots,\hspace{0.35em}K_{2n}^{\varpi_{1}}$\\ \hline
		$D_{n}$ & $K_{2}^{\varpi_{1}},\hspace{0.35em} K_{4}^{\varpi_{1}},\hspace{0.35em}\ldots,\hspace{0.35em}K_{2n-2}^{\varpi_{1}}\hspace{0.35em}$ and either $\hspace{0.35em}K_{n}^{\varpi_{n-1}}\hspace{0.35em}$ or $\hspace{0.35em}K_{n}^{\varpi_{n}}\hspace{0.35em}$\\ \hline
		$E_{6}$ & $K_{2}^{\varpi_{1}},\hspace{0.35em} K_{5}^{\varpi_{1}},\hspace{0.35em}K_{6}^{\varpi_{1}},\hspace{0.35em}K_{8}^{\varpi_{1}},\hspace{0.35em}K_{9}^{\varpi_{1}},\hspace{0.35em}K_{12}^{\varpi_{1}}$\\ \hline
		$E_{7}$ & $K_{2}^{\varpi_{1}},\hspace{0.35em} K_{6}^{\varpi_{1}},\hspace{0.35em} K_{8}^{\varpi_{1}},\hspace{0.35em} K_{10}^{\varpi_{1}},\hspace{0.35em} K_{12}^{\varpi_{1}},\hspace{0.35em} K_{14}^{\varpi_{1}},\hspace{0.35em} K_{18}^{\varpi_{1}}$\\ \hline
		$E_{8}$ & $K_{2}^{\varpi_{1}},\, K_{8}^{\varpi_{1}},\hspace{0.35em} K_{12}^{\varpi_{1}},\hspace{0.35em} K_{14}^{\varpi_{1}},\hspace{0.35em} K_{18}^{\varpi_{1}},\hspace{0.35em} K_{20}^{\varpi_{1}},\hspace{0.35em} K_{24}^{\varpi_{1}},\hspace{0.35em} K_{30}^{\varpi_{1}}$\\ \hline
		$F_{4}$ & $K_{2}^{\varpi_{1}},\hspace{0.35em} K_{6}^{\varpi_{1}},\hspace{0.35em} K_{8}^{\varpi_{1}},\hspace{0.35em} K_{12}^{\varpi_{1}}$\\ \hline
		$G_{2}$ & $K_{2}^{\varpi_{1}},\hspace{0.35em} K_{6}^{\varpi_{1}}$\\ \hline
\end{tabular}}
\caption{Generators of $Z(U(\mathfrak{g}))$ for the simple Lie algebras.}\label{tab:ZU_Gens}
\end{table}

For $\mathfrak{g}$ simple, the invariant, symmetric, bilinear trace form $(\cdot,\cdot)_{\kappa}$ must be a scalar multiple of the normalised  Killing form $(\cdot,\cdot)$. We define the (second order) Dynkin index, $I(\kappa)$, to be this ratio  
so that for any $X,Y\in\mathfrak{g}$ we have
\begin{align}\label{DynkinIndex}
	(X,Y)_{\kappa}=I(\kappa)(X,Y).
\end{align}
From this it follows that $\widetilde{X}_{i}^{\kappa}=I(\kappa)^{\shortminus1}\widetilde{X}_{i}$ and thus $K_{2}^{\kappa}=I(\kappa)^{\shortminus1}\sigma_{\kappa}(K_{2})$. In order to determine $I(\kappa)$, we take the trace of $K_{2}^{\kappa}$ over $V(\kappa)$ to obtain
\[
I(\kappa)^{\shortminus1}\hspace{0.1em}\dim (V(\kappa))\,c_{2}(\kappa)
=
I(\kappa)^{\shortminus1}\,\smashoperator{\sum_{1\leq i,j\leq d}}\hspace{0.5em}\left(\widetilde{X}_{i},\widetilde{X}_{j}\right)\left(X_{i}, X_{j}\right)_{\kappa}=\hspace{0.5em}\smashoperator{\sum_{1\leq i,j\leq d}}\hspace{0.5em}\delta_{i,j} = d,
\]
which implies
\begin{align}\label{DynkinIndexValue}
I(\kappa) = \dfrac{\langle\kappa,\kappa+2\rho\rangle \dim V(\kappa)}{d},
\; \text{ or equivalently, }\;
rI(\kappa) = \dfrac{\kappa\cdot(\kappa+2\rho)\dim V(\kappa)}{d}.
\end{align}
In particular, for the adjoint representation with highest root $\theta$, we find $I(\theta)=2h^{\vee}$ for dual Coxeter number $h^{\vee}:= 1+\langle\theta,\rho\rangle=\tfrac{1}{2}\langle\theta,\theta+2\rho\rangle$. Thus we obtain the well-known relation $\Tr_{\mathfrak{g}}(\ad_{X}\circ\ad_{Y})=2h^{\vee}(X,Y)$.

\section{Klimyk's Formula and Okubo's Formula}\label{sec:KlimOk}
We now review formulas due to Klimyk and Okubo that we exploit in our main results. First we present Klimyk's formula \cites{Kl,Sn}, though we omit a proof here. This provides an explicit means of calculating the decomposition of tensor products of irreducible representations. This will be directly applied to Okubo's  formula \cite{Ok} which provides a means of calculating the eigenvalues of generalised Casimir operators using such decompositions. 

\begin{theorem}[Klimyk's formula \cites{Kl,Sn}]\label{theor:Klim}
For $\mathfrak{g}$ a semisimple, complex Lie algebra with dominant, integral elements $\lambda,\kappa,\nu\in\Lambda^{+}$, the multiplicity of the irreducible representation $V(\nu)$ in the decomposition of $V(\lambda)\otimes V(\kappa)$ into irreducible representations is given by
\begin{align}\label{eq:Klim}
	n_{V(\lambda)\hspace{0.08em}\otimes V(\kappa)}(\nu) = \hspace{0.5em} \smashoperator{\sum_{\substack{\mu\in\Lambda(\kappa) \\ \lambda+\mu\sim\nu}}} \hspace{0.5em} m_{\kappa}(\mu)\,\sgn(\lambda+\mu).
\end{align}
\end{theorem}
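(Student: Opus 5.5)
We prove Klimyk's formula with characters and the Weyl character formula. Work in the group ring $\Z[\Lambda]$ with formal exponentials $e^{\mu}$. Write $\chi_{\lambda}=\sum_{\mu}m_{\lambda}(\mu)e^{\mu}$ for the formal character of $V(\lambda)$, and let $A_{\tau}:=\sum_{w\in W}\sgn(w)e^{w(\tau)}$ be the alternant. The Weyl character formula then reads $A_{\rho}\chi_{\lambda}=A_{\lambda+\rho}$ for $\lambda\in\Lambda^{+}$. Characters are multiplicative on tensor products and determine a representation up to isomorphism, so
\[
\chi_{\lambda}\chi_{\kappa}=\sum_{\nu\in\Lambda^{+}}n_{V(\lambda)\otimes V(\kappa)}(\nu)\,\chi_{\nu}.
\]
The plan is to multiply both sides by $A_{\rho}$ and compare coefficients.

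Multiplying by $A_{\rho}$, the right-hand side becomes $\sum_{\nu}n(\nu)A_{\nu+\rho}$. On the left, $A_{\rho}\chi_{\lambda}\chi_{\kappa}=A_{\lambda+\rho}\chi_{\kappa}$. Because $m_{\kappa}$ is $W$-invariant, we can write $\chi_{\kappa}=\sum_{\mu}m_{\kappa}(\mu)e^{w(\mu)}$ for any fixed $w\in W$. This gives
\[
A_{\lambda+\rho}\chi_{\kappa}=\sum_{w}\sgn(w)\sum_{\mu}m_{\kappa}(\mu)e^{w(\lambda+\rho+\mu)}=\sum_{\mu\in\Lambda(\kappa)}m_{\kappa}(\mu)A_{\lambda+\mu+\rho}.
\]
This reindexing step is where $W$-invariance of weight multiplicities is used.

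Next, normalise each alternant $A_{\lambda+\mu+\rho}$. Using $A_{u(\tau)}=\sgn(u)A_{\tau}$, we have two cases.
\begin{enumerate}
\item If $\lambda+\mu+\rho$ lies on a wall, some reflection fixes it. Then $A_{\lambda+\mu+\rho}=-A_{\lambda+\mu+\rho}=0$, which matches $\sgn(\lambda+\mu)=0$.
\item Otherwise there is a unique $w$ with $w\CDOT(\lambda+\mu)=\nu$ and $\nu+\rho\in\FWC$. By Remark~\ref{rem:SignatureRemarks}(ii) this $\nu$ lies in $\Lambda^{+}$, and $A_{\lambda+\mu+\rho}=\sgn(w)A_{\nu+\rho}=\sgn(\lambda+\mu)A_{\nu+\rho}$, since $\sgn(w^{-1})=\sgn(w)$.
\end{enumerate}
Grouping terms by $\nu$ yields
\[
\sum_{\nu\in\Lambda^{+}}\Bigl(\sum_{\mu:\,\lambda+\mu\sim\nu}m_{\kappa}(\mu)\sgn(\lambda+\mu)\Bigr)A_{\nu+\rho}=\sum_{\nu\in\Lambda^{+}}n(\nu)A_{\nu+\rho}.
\]

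Finally, the alternants $A_{\nu+\rho}$ for $\nu\in\Lambda^{+}$ are linearly independent. Each $\nu+\rho$ is strongly dominant, so its $W$-orbit is free, and the unique dominant exponent $e^{\nu+\rho}$ appearing in $A_{\nu+\rho}$ is distinct for distinct $\nu$. Comparing coefficients of $e^{\nu+\rho}$ gives \eqref{eq:Klim}.

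The main obstacle is the bookkeeping in the normalisation step. One must check that $\sgn(\lambda+\mu)$ in \eqref{eq:sgn_hstar} is exactly the sign relating $A_{\lambda+\mu+\rho}$ to the dominant alternant, including orientation conventions for $w$ versus $w^{-1}$. One must also check that the vanishing case matches $\sgn=0$. Everything else is standard once the Weyl character formula is assumed.
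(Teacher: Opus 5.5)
The paper does not prove this statement: it quotes Klimyk's formula from Klimyk and Snow and explicitly omits the proof, so there is no in-paper argument to match. Your proof is the standard derivation from the Weyl character formula (Brauer's identity followed by straightening alternants), and it is correct and complete. The individual steps all check out:
\begin{enumerate}
\item The substitution $\mu\mapsto w(\mu)$, made inside each $w$-summand, is exactly where the $W$-invariance of $m_{\kappa}$ enters.
\item Singular alternants vanish, because $\sgn(s_{\alpha})=-1$ and $\Z[\Lambda]$ is torsion-free.
\item Your $w$ is the inverse of the paper's linkage $\lambda+\mu=w\CDOT\nu$. This is harmless, since $\sgn(w^{-1})=\sgn(w)$.
\item A singular $\lambda+\mu+\rho$ is never linked to any $\nu\in\Lambda^{+}$, so grouping the terms by $\nu$ loses nothing.
\item $e^{\nu+\rho}$ is the only exponent of $A_{\nu+\rho}$ lying in $\FWCbar$, which gives the linear independence of the $A_{\nu+\rho}$.
\end{enumerate}

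Compared with citing the result, your route makes the paper self-contained modulo Weyl's character formula. It also shows that the paper's later use of Klimyk's formula only needs your intermediate, unnormalised identity
\[
\sum_{\nu\in\Lambda^{+}}n_{V(\lambda)\otimes V(\kappa)}(\nu)\,A_{\nu+\rho}=\sum_{\mu\in\Lambda(\kappa)}m_{\kappa}(\mu)\,A_{\lambda+\mu+\rho}.
\]
To see this, apply the linear functional $e^{\tau}\mapsto g(\tau^{2})\prod_{\alpha\in\Delta^{+}}\langle\tau,\alpha\rangle$. By the sign identity in the proof of Lemma~\ref{lem:NonZeroSignature}, it sends $A_{\tau}$ to $|W|\,g(\tau^{2})\prod_{\alpha\in\Delta^{+}}\langle\tau,\alpha\rangle$.
\begin{enumerate}
\item With $g\equiv 1$ this gives \eqref{NewTensorDimension}.
\item With $g(t)=\bigl((2rI(\kappa))^{-1}(t-(\lambda+\rho)^{2}-(\kappa+\rho)^{2}+\rho^{2})\bigr)^{p}$, together with Okubo's formula, it gives Theorem~\ref{theor:CasGen} directly.
\end{enumerate}
In both cases the signatures and the dot-invariance of $\xi_{\lambda,\kappa}$ are absorbed automatically. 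The fully normalised statement you prove, which is what the paper cites, is genuinely needed only for information about individual multiplicities $n_{V(\lambda)\otimes V(\kappa)}(\nu)$.
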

\begin{remark}
	\begin{enumerate}\leavevmode
		\item[(i)] 
		From \eqref{eq:sgn_hstar} note that $\sgn(\lambda+\mu)\in\{\pm 1\}$ for all of the summands in \eqref{eq:Klim} since $\nu\in\Lambda^{+}$ but that we require knowledge of $W$ to determine these signatures.
		\item[(ii)]  
		Since $\sgn(\lambda+\mu)=0$ for any $\lambda+\mu$ term which does not link to a dominant element, from Remark~\eqref{rem:SignatureRemarks}, the total number of irreducible representations that appear in the decomposition of $V(\lambda)\otimes V(\kappa)$ is given by $\sum_{\mu\in\Lambda(\kappa)}m_{\kappa}(\mu)\sgn(\lambda+\mu)$.
		\item[(iii)]
		An immediate consequence is that $n_{V(\lambda)\hspace{0.08em}\otimes V(\kappa)}(\nu)$ can only be nonzero if $\nu$ lies in the orbit $W\CDOT(\lambda+\mu)$ for some weight $\mu\in\Lambda(\kappa)$ - or by transposing the factors of the tensor representation, lies in the orbit $W\CDOT(\kappa+\mu')$ for some weight $\mu'\in\Lambda(\lambda)$.
		\end{enumerate}
\end{remark}
We next present Okubo's formula \cite{Ok} for the eigenvalue $c_{p}^{\kappa}(\lambda)$ of $K_{p}^{\kappa}$ of \eqref{GeneralisedCasimir} acting on the representation $\Lambda(\lambda)$ for $\lambda,\kappa\in\Lambda^{+}$ for all $p\in \Z_{\ge 0}$. 
The expression relies on the irreducible decomposition of $V(\lambda)\otimes V(\kappa)$ and the eigenvalues of $K_{2}^{\kappa}$. Here, we do include a proof due to significant notational changes from \cite{Ok} and to highlight the connection between the generalised Casimir operator $K_{p}^{\kappa}$ and tensor representations.
\begin{theorem}[Okubo's formula \cite{Ok}]\label{theor:Ok}
Let $\mathfrak{g}$ be a semisimple, complex Lie algebra. %with a basis $\lbrace X_{i}\rbrace_{i=1}^{d}$ and with 
For dominant, integral elements $\lambda,\kappa\in\Lambda^{+}$, the eigenvalue $c^{\kappa}_{p}(\lambda)$ of the action of $K_{p}^{\kappa}$ on the irreducible representation $V(\lambda)$ is given by:
\begin{align*}
	c_{p}^{\kappa}(\lambda) =\sum_{\nu\in\Lambda^{+}}
	\dfrac{n_{V(\lambda)\otimes V(\kappa)}(\nu)\dim V(\nu)}
	{\dim V(\lambda)} \hspace{0.1em}
\,\xi_{\lambda,\kappa}(\nu)^{p},
\end{align*}
where $\xi_{\lambda,\kappa}(\nu):=\tfrac{1}{2}\left(c_{2}^{\kappa}(\nu) -c_{2}^{\kappa}(\lambda)-c_{2}^{\kappa}(\kappa)\right)$. If $\mathfrak{g}$ is simple, we have
\begin{align}\label{XiValueSimple}
	\xi_{\lambda,\kappa}(\nu)=\left(2rI(\kappa)\right)^{\shortminus1}\left((\nu+\rho)^{2}-(\lambda+\rho)^{2}-(\kappa+\rho)^{2}+\rho^{2}\right).
\end{align}
\end{theorem}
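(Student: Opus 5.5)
The plan is to realise $K_{p}^{\kappa}$ as a power of a single operator on $V(\lambda)\otimes V(\kappa)$, diagonalise that operator using the irreducible decomposition, and take a partial trace over the $V(\kappa)$ factor.

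\textbf{The operator $Q_{\lambda,\kappa}$.} Define
\[
Q_{\lambda,\kappa}:=\sum_{i=1}^{d}\sigma_{\lambda}(X_{i})\otimes\sigma_{\kappa}(\widetilde{X}_{i}^{\kappa})\in\End\bigl(V(\lambda)\otimes V(\kappa)\bigr).
\]
Because the coproduct is $\Delta(X)=X\otimes 1+1\otimes X$, applying $\sigma_{\lambda}\otimes\sigma_{\kappa}$ to $\Delta(K_{2}')$ gives
\[
K_{2}'\mapsto K_{2}'\otimes 1+1\otimes K_{2}'+2Q_{\lambda,\kappa},
\]
where $K_{2}'=\sum_{i}X_{i}\widetilde{X}_{i}^{\kappa}$ is the quadratic Casimir normalised by $(\cdot,\cdot)_{\kappa}$. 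The two diagonal terms act as the scalars $c_{2}^{\kappa}(\lambda)$ and $c_{2}^{\kappa}(\kappa)$. The full Casimir acts as $c_{2}^{\kappa}(\nu)$ on each isotypic component $V(\nu)$. Hence $Q_{\lambda,\kappa}$ acts on each copy of $V(\nu)$ as the scalar
\[
\xi_{\lambda,\kappa}(\nu)=\tfrac12\bigl(c_{2}^{\kappa}(\nu)-c_{2}^{\kappa}(\lambda)-c_{2}^{\kappa}(\kappa)\bigr).
\]
It follows that
\[
\Tr_{V(\lambda)\otimes V(\kappa)}\bigl(Q_{\lambda,\kappa}^{p}\bigr)=\sum_{\nu\in\Lambda^{+}} n_{V(\lambda)\otimes V(\kappa)}(\nu)\,\dim V(\nu)\,\xi_{\lambda,\kappa}(\nu)^{p}.
\]

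\textbf{Relating $Q_{\lambda,\kappa}^{p}$ to $K_{p}^{\kappa}$.} Expanding the power,
\[
Q_{\lambda,\kappa}^{p}=\sum_{i_{1},\ldots,i_{p}}\sigma_{\lambda}(X_{i_{1}}\cdots X_{i_{p}})\otimes\sigma_{\kappa}(\widetilde{X}^{\kappa}_{i_{1}}\cdots\widetilde{X}^{\kappa}_{i_{p}}).
\]
Taking the partial trace over $V(\kappa)$ and using \eqref{MultilinearForm} yields
\[
\sum_{i_{1},\ldots,i_{p}}\bigl(\widetilde{X}^{\kappa}_{i_{1}},\ldots,\widetilde{X}^{\kappa}_{i_{p}}\bigr)_{\kappa}\,\sigma_{\lambda}(X_{i_{1}}\cdots X_{i_{p}})=\sigma_{\lambda}(K_{p}^{\kappa})=c_{p}^{\kappa}(\lambda)\,\id_{V(\lambda)}.
\]
Tracing over $V(\lambda)$ then gives $\Tr(Q_{\lambda,\kappa}^{p})=\dim V(\lambda)\,c_{p}^{\kappa}(\lambda)$, and dividing by $\dim V(\lambda)$ gives the formula. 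For $p=0$ the identity holds directly, since $Q_{\lambda,\kappa}^{0}$ has trace $\dim V(\lambda)\dim V(\kappa)=\dim V(\lambda)\,c_{0}^{\kappa}(\lambda)$. The step needing most care is the bookkeeping of the ordering of the dual basis in the partial trace; this is harmless because $(\cdot,\ldots,\cdot)_{\kappa}$ is evaluated with exactly the same index ordering as the product $X_{i_{1}}\cdots X_{i_{p}}$.

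\textbf{The simple case \eqref{XiValueSimple}.} Since $\widetilde{X}_{i}^{\kappa}=I(\kappa)^{-1}\widetilde{X}_{i}$, we have
\[
c_{2}^{\kappa}(\nu)=I(\kappa)^{-1}\langle\nu,\nu+2\rho\rangle=(rI(\kappa))^{-1}\bigl((\nu+\rho)^{2}-\rho^{2}\bigr).
\]
Substituting this for $\nu,\lambda,\kappa$ into the definition of $\xi_{\lambda,\kappa}(\nu)$ gives exactly \eqref{XiValueSimple}: the three $-\rho^{2}$ terms combine into a single $+\rho^{2}$.
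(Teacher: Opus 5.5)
Your proposal is correct and follows the paper's proof: the same operator $Q_{\lambda,\kappa}$, rewritten through quadratic Casimirs (you derive this from the coproduct, where the factor $2$ uses the symmetry of the tensor $\sum_{i}X_{i}\otimes\widetilde{X}_{i}^{\kappa}$, which you leave implicit), and the same comparison of the full trace of $Q_{\lambda,\kappa}^{p}$ via the irreducible decomposition with the partial trace over $V(\kappa)$ giving $\sigma_{\lambda}(K_{p}^{\kappa})$. The simple-case formula is also obtained exactly as in the paper, from $c_{2}^{\kappa}(\nu)=I(\kappa)^{-1}\langle\nu,\nu+2\rho\rangle$.
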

\begin{proof}
The result is trivially true for $p=0$ since $	c_{0}^{\kappa}(\lambda)=\dim V(\kappa)$.
Define the following operator in $\End(V(\lambda)\otimes V(\kappa))$:
\begin{align}\label{QOperator}
	Q_{\lambda,\kappa}:&= \sum_{1\leq i\leq d}\sigma_{\lambda}(X_{i})\otimes\sigma_{\kappa}(\widetilde{X}^{\kappa}_{i})\\
	&=\sum_{1\leq i,j\leq d}\left(\widetilde{X}_{i}^{\kappa},\widetilde{X}_{j}^{\kappa}\right)_{\kappa}\sigma_{\lambda}(X_{i})\otimes\sigma_{\kappa}(X_{j}).\nonumber
\end{align}
We have that $Q_{\lambda,\kappa}$ is basis independent and is central in $\End(V(\lambda)\otimes V(\kappa))$ and can be rewritten in terms of degree $2$ Casimir operators:
\begin{align*}
	Q_{\lambda,\kappa}=\tfrac{1}{2}\left(\sigma_{V(\lambda)\hspace{0.08em}\otimes V(\kappa)}(K^{\kappa}_{2})-\sigma_{\lambda}(K^{\kappa}_{2})\otimes \id_{V(\kappa)}-\id_{V(\lambda)}\otimes\sigma_{\kappa}(K^{\kappa}_{2})\right).
\end{align*}
By Schur's Lemma,
$Q_{\lambda,\kappa}$ acts on a component $V(\nu)$ in the irreducible decomposition of $V(\lambda)\otimes V(\kappa)$  as scalar multiplication by $\xi_{\lambda,\kappa}(\nu)$. This implies that for $p\in \Z^{+}$ we have
\begin{align}\label{QpExpression1}
%	\Tr_{\hspace{0.1em}V(\lambda)\otimes V(\kappa)\hspace{0.1em}}(Q_{\lambda,\kappa}) &= \sum_{\nu\in\Lambda^{+}}n_{V(\lambda)\otimes V(\kappa)}(\nu)\hspace{0.1em}\dim V(\nu)\,\xi_{\lambda,\kappa}(\nu),\nonumber\\
%	&\hspace{2em}\text{and}\nonumber\\
	\Tr_{\hspace{0.1em}V(\lambda)\hspace{0.08em}\otimes V(\kappa)\hspace{0.1em}}(Q_{\lambda,\kappa}^{p}) &= \sum_{\nu\in\Lambda^{+}}n_{V(\lambda)\hspace{0.08em}\otimes V(\kappa)}(\nu)\hspace{0.1em}\dim V(\nu)\,\xi_{\lambda,\kappa}(\nu)^{p}.
\end{align}
The $p^{\text{th}}$ power of $Q_{\lambda,\kappa}$ can be written as the sum
\begin{align}\label{QPower}
	Q_{\lambda,\kappa}^{p}= \hspace{1em}\smashoperator{\sum_{1\leq i_{1},\ldots,i_{p}\leq d}}\hspace{1em}\sigma_{\lambda}(X_{i_{1}}\otimes\cdots\otimes X_{i_{p}})\otimes\sigma_{\kappa}(\widetilde{X}_{i_{1}}^{\kappa}\otimes\cdots\otimes\widetilde{X}_{i_{p}}^{\kappa}).
\end{align}
Computing the partial trace over $V(\kappa)$ gives the image in $\End V(\lambda)$ of the generalised, degree $p\ge 1$ Casimir operator $K_{p}^{\kappa}$ under the representation map $\sigma_{\lambda}$ as
\begin{align*}
	\Tr_{V(\kappa)}\left(Q_{\lambda,\kappa}^{p}\right)
	\hspace{0.5em}= \hspace{1em}\smashoperator{\sum_{1\leq i_{1},\ldots,i_{p}\leq d}}\hspace{1em}\left(\widetilde{X}_{i_{1}}^{\kappa},\cdots,\widetilde{X}_{i_{p}}^{\kappa}\right)_{\kappa}\sigma_{\lambda}(X_{i_{1}}\otimes\cdots\otimes X_{i_{p}})\hspace{0.5em}=\hspace{0.5em}\sigma_{\lambda}(K_{p}^{\kappa}).
\end{align*}
It follows, in contrast to \eqref{QpExpression1}, that the full trace of $Q_{\lambda,\kappa}^{p}$ can be expressed as
\begin{align}\label{QpExpression2}
	\Tr_{\hspace{0.1em}V(\lambda)\hspace{0.08em}\otimes V(\kappa)\hspace{0.1em}}(Q_{\lambda,\kappa}^{p}) \hspace{0.5em}=\hspace{0.5em} \Tr_{\hspace{0.1em}V(\lambda)\hspace{0.1em}}(\sigma_{\lambda}(K_{p}^{\kappa})) \hspace{0.5em}=\hspace{0.45em} \dim V(\lambda)\,c^{\kappa}_{p}(\lambda).
\end{align}
The main result follows from equating the expressions in \eqref{QpExpression1} and \eqref{QpExpression2}.

If $\mathfrak{g}$ is simple, then %recall the Dynkin index \eqref{DynkinIndex} and 
\eqref{DynkinIndexValue} gives a relation between the eigenvalues of generalised quadratic Casimir operators and the standard quadratic Casimir operator via  $c_{2}^{\kappa}(\nu)=I(\kappa)^{\shortminus1}c_{2}(\nu)$ for $\nu\in\Lambda^{+}$. Thus
%\begin{align*}
%	c_{2}^{\kappa}(\nu) &= I(\kappa)^{\shortminus1}\langle\nu,\nu+2\rho\rangle= \left(rI(\kappa)\right)^{\shortminus1}\nu\cdot(\nu+2\rho)= \left(rI(\kappa)\right)^{\shortminus1}\left((\nu+\rho)^{2}-\rho^{2}\right)\nonumber\\[6pt]
%	&\Rightarrow \hspace{0.5em}\xi_{\lambda,\kappa}(\nu) = \left(2rI(\kappa)\right)^{\shortminus1} \left((\nu+\rho)^{2}-(\lambda+\rho)^{2}-(\kappa+\rho)^{2}+\rho^{2}\right),
%\end{align*}
\begin{align*}
c_{2}^{\kappa}(\nu) &= I(\kappa)^{\shortminus1}\langle\nu,\nu+2\rho\rangle= \left(rI(\kappa)\right)^{\shortminus1}\nu\cdot(\nu+2\rho)= \left(rI(\kappa)\right)^{\shortminus1}\left((\nu+\rho)^{2}-\rho^{2}\right)\nonumber,
\end{align*}
where $r$ is given by $\mu\cdot\mu'=r\langle\mu,\mu'\rangle$ for $\mu,\mu'\in\mathfrak{h}^{*}$. Hence \eqref{XiValueSimple} follows.
\end{proof}
The $Q_{\lambda,\kappa}$ operator that appears in this proof of Okubo's formula also provides a way to prove a relationship between the eigenvalues $c_{p}^{\kappa}(\lambda)$ and $c_{p}^{\lambda}(\kappa)$ referred to as the ``Reciprocity theorem'' by Perelomov and Popov \cite{PP}.
\begin{lemma}\label{lem:recip}
	For $\mathfrak{g}$ simple and any $\lambda,\kappa\in\Lambda^{+}$, we have that:
	\begin{align*}
		c_{p}^{\kappa}(\lambda) = \left(\dfrac{\dim V(\lambda)}{\dim V(\kappa)}\right)^{p\shortminus1}\left(\dfrac{\lambda\cdot(\lambda+2\rho)}{\kappa\cdot(\kappa+2\rho)}\right)^{p}c_{p}^{\lambda}(\kappa).
	\end{align*}
\end{lemma}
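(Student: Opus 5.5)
Compare $Q_{\lambda,\kappa}$ with $Q_{\kappa,\lambda}$ under the flip isomorphism $P:V(\lambda)\otimes V(\kappa)\to V(\kappa)\otimes V(\lambda)$, $P(u\otimes v)=v\otimes u$, and then take full traces of $p$-th powers as in the proof of Theorem~\ref{theor:Ok}.

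\emph{Step 1: relate the dual bases.} Since $\mathfrak{g}$ is simple, \eqref{DynkinIndex} gives $\widetilde{X}_{i}^{\kappa}=I(\kappa)^{\shortminus1}\widetilde{X}_{i}$ and $\widetilde{X}_{i}^{\lambda}=I(\lambda)^{\shortminus1}\widetilde{X}_{i}$, where $\widetilde{X}_i$ is the dual basis for $(\cdot,\cdot)$. Hence
\begin{align*}
Q_{\lambda,\kappa}=I(\kappa)^{\shortminus1}\sum_{i}\sigma_{\lambda}(X_{i})\otimes\sigma_{\kappa}(\widetilde{X}_{i}),\qquad
Q_{\kappa,\lambda}=I(\lambda)^{\shortminus1}\sum_{i}\sigma_{\kappa}(X_{i})\otimes\sigma_{\lambda}(\widetilde{X}_{i}).
\end{align*}
The tensor $\sum_i X_i\otimes\widetilde{X}_i\in\mathfrak{g}\otimes\mathfrak{g}$ is basis independent and symmetric, because it is the inverse of the symmetric form $(\cdot,\cdot)$. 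Applying the flip therefore gives $P\,Q_{\lambda,\kappa}P^{\shortminus1}=\tfrac{I(\lambda)}{I(\kappa)}\,Q_{\kappa,\lambda}$.

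\emph{Step 2: take traces.} Raise the conjugation identity to the $p$-th power and take full traces. By \eqref{QpExpression2},
\begin{align*}
\dim V(\lambda)\,c_{p}^{\kappa}(\lambda)=\Tr(Q_{\lambda,\kappa}^{p})=\left(\tfrac{I(\lambda)}{I(\kappa)}\right)^{p}\Tr(Q_{\kappa,\lambda}^{p})=\left(\tfrac{I(\lambda)}{I(\kappa)}\right)^{p}\dim V(\kappa)\,c_{p}^{\lambda}(\kappa).
\end{align*}
This holds for $p\ge1$. For $p=0$ the claimed identity is checked directly, using $c_0^\kappa(\lambda)=\dim V(\kappa)$: both sides equal $\dim V(\kappa)$.

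\emph{Step 3: substitute the Dynkin indices.} By \eqref{DynkinIndexValue},
\[
\frac{I(\lambda)}{I(\kappa)}=\frac{\lambda\cdot(\lambda+2\rho)\dim V(\lambda)}{\kappa\cdot(\kappa+2\rho)\dim V(\kappa)}.
\]
Substituting into Step 2 and dividing by $\dim V(\lambda)$ yields exactly the stated formula, with exponent $p-1$ on the dimension ratio. This requires $\lambda,\kappa\neq0$ so that $I$ is nonzero; the case of a zero weight is degenerate and is interpreted as in the statement.

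The main point needing care is Step 1: justifying that the flip conjugates one $Q$ into the scaled other. This rests on the symmetry and basis independence of the canonical element $\sum_i X_i\otimes\widetilde{X}_i$. One shows this by expanding in a basis orthonormal for $(\cdot,\cdot)$, where $\widetilde{X}_i=X_i$ and symmetry is evident.
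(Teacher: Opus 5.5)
Your proposal is correct and follows essentially the same route as the paper. The paper likewise rescales the dual bases by the Dynkin indices and uses basis independence of $Q$, which amounts to the symmetry of $\sum_i X_i\otimes\widetilde{X}_i$, together with $V(\lambda)\otimes V(\kappa)\cong V(\kappa)\otimes V(\lambda)$ to obtain $I(\kappa)^{p}\dim V(\lambda)\,c_{p}^{\kappa}(\lambda)=I(\lambda)^{p}\dim V(\kappa)\,c_{p}^{\lambda}(\kappa)$, then substitutes \eqref{DynkinIndexValue}; your explicit flip identity $P\,Q_{\lambda,\kappa}P^{\shortminus1}=\tfrac{I(\lambda)}{I(\kappa)}Q_{\kappa,\lambda}$ simply makes that trace-equality step more transparent.
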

\begin{proof} The result is trivially true for $p=0$. %Let $Q_{\lambda,\kappa}$ be as in \eqref{QOperator}. 
	By \eqref{QPower} we have for $p\in \Z^{+}$ that
	\begin{align*}
		I(\kappa)^{p}\,Q_{\lambda,\kappa}^{p} = \sum_{1\leq i_{1},\ldots,i_{p}\leq d} \sigma_{\lambda}(X_{i_{1}}\otimes\cdots\otimes X_{i_{p}})\otimes \sigma_{\kappa}(\widetilde{X}_{i_{1}}\otimes\cdots\otimes \widetilde{X}_{i_{p}}).
	\end{align*}
	Since $Q_{\lambda,\kappa}$ is basis independent we can swap $\lambda$ and $\kappa$ and find
	\begin{align*}
		I(\lambda)^{p}\,Q_{\kappa,\lambda}^{p}= \sum_{1\leq i_{1},\ldots,i_{p}\leq d}  \sigma_{\kappa}(\widetilde{X}_{i_{1}}\otimes\cdots\otimes \widetilde{X}_{i_{p}})\otimes\sigma_{\lambda}(X_{i_{1}}\otimes\cdots\otimes X_{i_{p}}).
	\end{align*}
Since $V(\lambda)\otimes V(\kappa)\cong V(\kappa)\otimes V(\lambda)$ as representations, we find
	\begin{align*}
		I(\kappa)^{p}\,\Tr_{V(\lambda)\otimes V(\kappa)}\left(Q_{\lambda,\kappa}^{p}\right) = I(\lambda)^{p}\,\Tr_{\hspace{0.1em}V(\kappa)\otimes V(\lambda)\hspace{0.1em}}\left(Q_{\kappa,\lambda}^{p}\right).
	\end{align*}
	So by \eqref{QpExpression2} we have
	\begin{align*}
		I(\kappa)^{p}\hspace{0.1em}\dim V(\lambda)\,c_{p}^{\kappa}(\lambda) \hspace{0.15em}=\hspace{0.15em} I(\lambda)^{p} \hspace{0.1em}\dim V(\kappa)\,c_{p}^{\lambda}(\kappa).
	\end{align*}
	The result then follows from \eqref{DynkinIndexValue}.
\end{proof}

\section{Generating Functions of Casimir Invariants}\label{sec:GenFun}
This section contains our main result, Theorem~\ref{theor:CasGen}, giving a novel expression for the generating function of the generalised Casimir operator eigenvalues $c_{p}^{\kappa}(\lambda)$  for $\lambda,\kappa\in\Lambda^{+}$. 
This expression exploits natural extensions of $\dim V(\nu)$ of \eqref{DimensionFormula} and $\xi_{\lambda,\kappa}(\nu)$ of  \eqref{XiValueSimple} as functions of $\nu\in\Lambda^{+}$ to functions on  $\mathfrak{h}^{*}$. Dealing first with the former, for $\mu \in \mathfrak{h}^{*}$ we define

\begin{align}\label{eq:D_defn}
	D(\mu):=\prod_{\alpha\in\Delta^{+}}\dfrac{\langle\mu+\rho,\alpha\rangle}{\langle\rho,\alpha\rangle}.
\end{align}

Clearly $D(\lambda)=\dim V(\lambda)$ for $\lambda\in\Lambda^{+}$ by the Weyl dimension formula \eqref{DimensionFormula}. Recalling the signature \eqref{eq:sgn_hstar}, we consider elements $\mu\in\mathfrak{h}^{*}$ with $\sgn(\mu)=0$, then with $\sgn(\mu)\neq0$ each in turn. Recall, from Remark~\eqref{rem:SignatureRemarks}, that $\sgn(\mu)=0$ iff $\mu$ lies in the boundary of Weyl chambers and $\sgn(\mu)\neq0$ iff $\mu\sim\nu$ for some $\nu\in\mathfrak{h}^{*}$ such that $\nu+\rho$ is strongly dominant.
\begin{lemma}\label{lem:ZeroSignature}
	For $\mu\in\mathfrak{h}^{*}$, then $\sgn(\mu)=0$ if and only if $D(\mu)=0$.
\end{lemma}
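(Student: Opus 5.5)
The plan is to express both conditions in terms of the single vector $\tau:=\mu+\rho$ and reduce each to the statement that $\tau$ is orthogonal to some root. The product in \eqref{eq:D_defn} has denominators $\langle\rho,\alpha\rangle>0$ for every $\alpha\in\Delta^{+}$, since $\rho$ is strongly dominant. Hence $D(\mu)=0$ if and only if $\langle\tau,\alpha\rangle=0$ for some $\alpha\in\Delta^{+}$, equivalently for some $\alpha\in\Delta$. So the lemma amounts to showing that $\sgn(\mu)=0$ exactly when $\tau$ lies on some root hyperplane $H_{\alpha}:=\{\tau:\langle\tau,\alpha\rangle=0\}$.

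By Remark~\ref{rem:SignatureRemarks}(i), $\sgn(\mu)=0$ if and only if $\tau$ does not lie in any open Weyl chamber. The argument therefore rests on the standard fact that the open Weyl chambers are precisely the connected components of $\mathfrak{h}^{*}_{\R}\setminus\bigcup_{\alpha\in\Delta^{+}}H_{\alpha}$, where $\mathfrak{h}^{*}_{\R}$ is the real span of the roots (cf.\ Humphreys \cite{Hu1}). I would verify the two directions separately.

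\emph{If $\tau\in w(\FWC)$, then $D(\mu)\neq0$.} In this case $w^{-1}\tau$ is strongly dominant. This means $\langle w^{-1}\tau,\alpha_{i}\rangle>0$ for each simple root, since strong dominance against the fundamental weights is the same as positivity against the simple roots. Hence $\langle w^{-1}\tau,\beta\rangle>0$ for every $\beta\in\Delta^{+}$. By $W$-invariance of $\langle\cdot,\cdot\rangle$ we get $\langle\tau,w\beta\rangle\neq0$ for all $\beta$. Since $w$ permutes $\Delta$, $\tau$ is orthogonal to no root, and so $D(\mu)\neq0$.

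\emph{If $\langle\tau,\alpha\rangle\neq0$ for all $\alpha\in\Delta$, then $\tau$ lies in a Weyl chamber.} Choose $w\in W$ with $w^{-1}\tau$ dominant; such a $w$ exists because every element is $W$-conjugate to a dominant one. The element $w^{-1}\tau$ is still orthogonal to no root, so every inequality $\langle w^{-1}\tau,\alpha_{i}\rangle\ge0$ must be strict. Thus $w^{-1}\tau\in\FWC$, and $\tau\in w(\FWC)$.

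Combining the two directions with Remark~\ref{rem:SignatureRemarks}(i) gives the equivalence. The one technical point is the meaning of ``lies in a Weyl chamber'' when $\mu\in\mathfrak{h}^{*}$ is complex. For general complex $\tau$, membership in an open chamber should be read as $\tau$ being $W$-conjugate to a strongly dominant element, which forces $\tau$ to be real. I would therefore state the lemma for the real form $\mathfrak{h}^{*}_{\R}$, which contains $\Lambda$ and every weight used later.

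If $\mathfrak{h}^{*}$ is intended to be complex, the second direction needs more care. A complex $\tau$ with $D(\mu)\neq0$ need not lie in any chamber. The cleanest fix is to restrict to the real form, and I expect this bookkeeping to be the only real obstacle; the rest is standard root-system facts.
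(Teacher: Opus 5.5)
Your proposal is correct and follows the paper's route. In both, $\langle\rho,\alpha\rangle>0$ gives $D(\mu)=0$ exactly when $\mu+\rho$ lies on a root hyperplane, and the complement of the open Weyl chambers is the union of those hyperplanes. The paper simply asserts that last fact; you check both directions. Your real-form caveat is also justified: $\mu=i\rho-\rho$ gives $D(\mu)=i^{|\Delta^{+}|}\neq0$ but $\sgn(\mu)=0$.

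One aside is false, however. Positivity against the $\varpi_{i}$ is not the same as positivity against the $\alpha_{i}$. The condition $\langle\tau,\varpi_{i}\rangle>0$ for all $i$ says $\tau$ lies in the open cone spanned by the simple roots, and that cone in general strictly contains $\FWC$. For example, in $A_{2}$ the vector $\tau=\alpha_{1}+\tfrac14\alpha_{2}$ has $\langle\tau,\varpi_{1}\rangle=1$ and $\langle\tau,\varpi_{2}\rangle=\tfrac14$, but $\langle\tau,\alpha_{2}\rangle=-\tfrac12$. The paper's wording of ``dominant'' is itself a slip. Your argument goes through once ``strongly dominant'' is read in the standard sense $\langle\tau,\alpha_{i}\rangle>0$, which is what $\FWC$ being a Weyl chamber requires.
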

\begin{proof}
	We have $\sgn(\mu)=0$ iff $\mu+\rho$ lies outside of the Weyl chambers, with the complement of the Weyl chambers being the union of all hyperplanes in $\mathfrak{h}^{*}$ perpendicular to a positive root $\alpha\in\Delta^{+}$. That is, $\sgn(\mu)=0$ iff there is some $\alpha\in\Delta^{+}$ such that $\langle\mu+\rho,\alpha\rangle=0$. Then by definition \eqref{eq:D_defn}, $\sgn(\mu)=0$ iff $D(\mu)=0$.
\end{proof}

\begin{lemma}\label{lem:NonZeroSignature}
If $\mu\sim\nu$ for $\mu,\nu\in\mathfrak{h}^{*}$ such that $\nu+\rho\in\FWC$ then $D(\mu)=	\sgn(\mu)D(\nu)$.
\end{lemma}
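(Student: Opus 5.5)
The plan is to show that the polynomial $P(\tau):=\prod_{\alpha\in\Delta^{+}}\langle\tau,\alpha\rangle$ is $W$-anti-invariant, i.e. $P(w\tau)=\sgn(w)P(\tau)$ for all $w\in W$, and then apply this with $\tau=\nu+\rho$. By \eqref{eq:D_defn}, $D(\mu)=P(\mu+\rho)/P(\rho)$. If $\mu=w\CDOT\nu$, then $\mu+\rho=w(\nu+\rho)$, and anti-invariance gives $D(\mu)=\sgn(w)D(\nu)$. Since $\nu+\rho\in\FWC$ is strongly dominant, the linkage $w$ is unique, and by \eqref{eq:sgn_hstar} we have $\sgn(\mu)=\sgn(w)$. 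This is exactly the claim.

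Because $W$ is generated by the simple reflections and both $P\circ w$ and $\sgn(w)$ are multiplicative in $w$, it suffices to prove $P(s_i\tau)=-P(\tau)$ for each $i$. The step is as follows. Since $s_i$ is an isometry and an involution, $\langle s_i\tau,\alpha\rangle=\langle\tau,s_i\alpha\rangle$. The standard fact (Humphreys) is that $s_i$ permutes $\Delta^{+}\setminus\{\alpha_i\}$ and sends $\alpha_i$ to $-\alpha_i$. Therefore
\begin{align*}
P(s_i\tau)=\prod_{\alpha\in\Delta^{+}}\langle\tau,s_i\alpha\rangle=\langle\tau,-\alpha_i\rangle\prod_{\alpha\in\Delta^{+}\setminus\{\alpha_i\}}\langle\tau,\alpha\rangle=-P(\tau).
\end{align*}
Induction on the word length then gives $P(w\tau)=(-1)^{s}P(\tau)=\sgn(w)P(\tau)$ for $w=s_{i_1}\cdots s_{i_s}$.

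The only point needing care, and the main obstacle, is well-definedness. The signature $\sgn(w)=(-1)^s$ is defined via a chosen word. The computation above avoids this issue, however. Take $\tau=\nu+\rho$; then $P(\tau)\neq0$ because $\nu+\rho$ is strongly dominant, so $\langle\nu+\rho,\alpha\rangle>0$ for all positive $\alpha$. Hence $(-1)^s=P(w\tau)/P(\tau)$ depends only on $w$, and the parity of $s$ is independent of the word. Everything else, namely $\rho$-shift bookkeeping and the normalisation by $P(\rho)\neq0$, is routine.
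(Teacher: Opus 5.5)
Your proof is correct and is essentially the paper's argument: both use Humphreys' lemma that $s_i$ permutes $\Delta^{+}\setminus\{\alpha_i\}$ to get $\prod_{\alpha\in\Delta^{+}}\langle w\tau,\alpha\rangle=\sgn(w)\prod_{\alpha\in\Delta^{+}}\langle\tau,\alpha\rangle$, extend it by induction on words in the simple reflections, and apply it at $\nu+\rho$ (where the linkage is unique, so $\sgn(\mu)=\sgn(w)$). Your extra remark that $P(\tau)\neq0$ forces the parity of the word to depend only on $w$ is a small self-contained bonus; the paper instead takes the standard signature as already well-defined.
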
	 
\begin{proof}
Since $\nu+\rho\in\FWC$, there is a unique linkage $w$ such that $\mu+\rho=w(\nu+\rho)$ and, by \eqref{eq:sgn_hstar}, we have $\sgn(\mu)=\sgn(w)$. By Lemma B in Section $10.2$ of Humphreys \cite{Hu1} for $1\leq i\leq n$, we know for the standard Weyl reflection  $s_{i}(\alpha_{i})=-\alpha_{i}$ that $s_{i}$ permutes $\Delta^{+}\setminus\lbrace\alpha_{i}\rbrace$ i.e. the other positive roots. Since, for any $\tau,\tau'\in\mathfrak{h}^{*}$, we have $\langle \tau,\tau'\rangle=\langle s_{i}(\tau),s_{i}(\tau')\rangle$,  we find
\begin{align*}
	\prod_{\alpha\in\Delta^{+}}\langle \tau,\alpha\rangle 
	= \prod_{\alpha\in\Delta^{+}}\langle s_{i}(\tau),s_{i}(\alpha)\rangle 
	= -\hspace{0.125em}\smashoperator[l]{\prod_{\alpha\in\Delta^{+}}}\langle s_{i}(\tau),\alpha\rangle =\hspace{0.125em}\sgn(s_{i})\hspace{0.125em}\smashoperator[l]{\prod_{\alpha\in\Delta^{+}}}\langle s_{i}(\tau),\alpha\rangle.
\end{align*} 
Since $W$ is generated by the simple Weyl reflections and the signature is multiplicative,
\begin{align*}
	\prod_{\alpha\in\Delta^{+}}\langle \tau,\alpha\rangle =\sgn(w)\prod_{\alpha\in\Delta^{+}}\langle w(\tau),\alpha\rangle,
	\mbox{ for all } w\in W.
\end{align*}
Taking $\tau=\mu+\rho$ and using the definition \eqref{eq:D_defn} of $D(\mu)$, the result follows.
\end{proof}

Note that $D(\mu)=\sgn(w)D(\nu)$ holds for any $\mu\sim\nu$ with linkage $w$, though it may not be the case that $\sgn(\mu)=\sgn(w)$, for example if $\sgn(\mu)=0$ then $D(\mu)=D(\nu)=0$ by Lemma~\eqref{lem:ZeroSignature}. Recall from Remark~\eqref{rem:SignatureRemarks} that $\mu\in\Lambda$ has $\sgn(\mu)\neq0$ iff $\mu$ is linked to a dominant integral element $\lambda\in\Lambda^{+}$. Then Lemmas~\ref{lem:ZeroSignature} and ~\ref{lem:NonZeroSignature} give us

\begin{corollary}\label{cor:DLambda}
	Let $\mu\in\Lambda$. If $\mu\sim\lambda$ for some $\lambda\in\Lambda^{+}$ then $D(\mu)=\sgn(\mu)\dim V(\lambda)$. Otherwise $D(\mu)=\sgn(\mu)=0$.
\end{corollary}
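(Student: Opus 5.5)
The plan is to split into the two cases of the statement, $\mu\sim\lambda$ for some $\lambda\in\Lambda^{+}$ or not, and in each case reduce to Lemma~\ref{lem:NonZeroSignature} or Lemma~\ref{lem:ZeroSignature}, using Remark~\ref{rem:SignatureRemarks}(ii) to pass between the integral and the general statements.

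\emph{First case.} Suppose $\mu\in\Lambda$ and $\mu\sim\lambda$ with $\lambda\in\Lambda^{+}$.
\begin{itemize}
\item[(a)] Since $\lambda$ is dominant integral, $\lambda+\rho\in\Lambda^{+}+\rho\subset\FWC$, so $\lambda+\rho$ is strongly dominant.
\item[(b)] The hypothesis of Lemma~\ref{lem:NonZeroSignature} therefore holds with $\nu=\lambda$, giving $D(\mu)=\sgn(\mu)D(\lambda)$.
\item[(c)] By the Weyl dimension formula \eqref{DimensionFormula}, $D(\lambda)=\dim V(\lambda)$ because $\lambda\in\Lambda^{+}$.
\end{itemize}
This gives $D(\mu)=\sgn(\mu)\dim V(\lambda)$. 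Moreover, since the linkage is unique, $\sgn(\mu)=\pm1$ here.

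\emph{Second case.} Suppose $\mu\in\Lambda$ is not linked to any element of $\Lambda^{+}$.
\begin{itemize}
\item[(a)] By Remark~\ref{rem:SignatureRemarks}(ii), $\sgn(\mu)\neq 0$ iff $\mu$ is linked to a dominant integral element. Hence $\sgn(\mu)=0$.
\item[(b)] Lemma~\ref{lem:ZeroSignature} then gives $D(\mu)=0$.
\end{itemize}

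The only point needing care is Remark~\ref{rem:SignatureRemarks}(ii), that $\Lambda\cap(\FWC-\rho)=\Lambda^{+}$. This says that an integral $\tau$ with $\tau+\rho$ strongly dominant is itself dominant. In coordinates it is the observation that $\langle\tau+\rho,\alpha_i^\vee\rangle$ is a positive integer, hence $\ge 1$, so $\langle\tau,\alpha_i^\vee\rangle\ge 0$. Both actions of $W$ preserve $\Lambda$, since $\rho\in\Lambda$. So any $\tau$ with $\tau+\rho\in\FWC$ that is linked to an integral $\mu$ is integral, hence lies in $\Lambda^{+}$. This matches the two cases exactly and involves no real obstacle, as the remark is already stated earlier in the paper.
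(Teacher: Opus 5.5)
Your proposal is correct and is essentially the paper's argument. The paper obtains the corollary in one line by combining Remark~\ref{rem:SignatureRemarks}(ii) (for integral $\mu$, $\sgn(\mu)\neq 0$ iff $\mu$ is linked to some $\lambda\in\Lambda^{+}$) with Lemma~\ref{lem:NonZeroSignature} (taking $\nu=\lambda$, so that $D(\lambda)=\dim V(\lambda)$) and Lemma~\ref{lem:ZeroSignature}, exactly as in your two cases; your extra check that $\Lambda\cap(\FWC-\rho)=\Lambda^{+}$ via $\langle\tau+\rho,\alpha_i^\vee\rangle\ge 1$ correctly justifies a fact the paper simply asserts in that remark.
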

Corollary~\ref{cor:DLambda} allows us determine $\sgn(\mu)$ for any $\mu\in\Lambda$, and hence, through Klimyk's formula  (Theorem~\ref{theor:Klim}), to compute $\dim V(\lambda)\otimes V(\kappa)$ for $\lambda,\kappa\in\Lambda^{+}$ as:
\begin{align*}
	\dim V(\lambda)\dim V(\kappa)&= \smashoperator[r]{\sum_{\nu\in\Lambda^{+}}}\,n_{V(\lambda)\hspace{0.08em}\otimes V(\kappa)}(\nu)\hspace{0.1em}\dim V(\nu)\\[3pt]
	&= \smashoperator[r]{\sum_{\nu\in\Lambda^{+}}}\hspace{0.35em}\smashoperator[r]{\sum_{\substack{\mu\in\Lambda(\kappa) \\ \lambda+\mu\sim\nu}}}\, m_{\kappa}(\mu)\,\sgn(\lambda+\mu)\hspace{0.1em}\dim V(\nu)\\[3pt]
	&= \smashoperator[r]{\sum_{\nu\in\Lambda^{+}}}\hspace{0.35em}\smashoperator[r]{\sum_{\substack{\mu\in\Lambda(\kappa) \\ \lambda+\mu\sim\nu}}}\, m_{\kappa}(\mu)\,D(\lambda+\mu),
%	&= \smashoperator[r]{\sum_{\substack{\mu\in\Lambda(\kappa) \\ \lambda+\mu\in W\CDOT\Lambda^{+}}}}\hspace{0.75em}m_{\kappa}(\mu)\,\sgn(\lambda+\mu)\hspace{0.1em}\dim V(w_{\lambda+\mu}\CDOT\left(\lambda+\mu\right)),
\end{align*}
Since $D(\lambda+\mu)=0$ if $\lambda+\mu$ is not linked to any $\nu\in\Lambda^{+}$, we may simplify this further as
%
%Since, whenever $\lambda+\mu\notin W\CDOT\Lambda^{+}$ we have $\sgn(\lambda+\mu)=0$, we can simplify this expression further by formally defining such a $w_{\lambda+\mu}$ to map to $0$. Then
%\begin{align}\label{TensorDimension}
%	\smashoperator[r]{\sum_{\nu\in\Lambda^{+}}}n_{V(\lambda)\otimes V(\kappa)}(\nu)\dim V(\nu) =\smashoperator[lr]{\sum_{\mu\in\Lambda(\kappa)}}m_{\kappa}(\mu)\hspace{0.1em}\sgn(\lambda+\mu)\dim V (w_{\lambda+\mu}\CDOT(\lambda+\mu) ).
%\end{align}
%This approach motivates examining how the dimension of $V(w_{\tau}\CDOT\tau)$ could be directly expressed in terms of $\tau$ for arbitrary $\tau\in\Lambda$. Currently, we must both determine if a linkage to a dominant element exists and what the linkage is, when only the signature $\sgn(\tau)$ and the dimension  $\dim V(w_{\tau}\CDOT\tau) $ are necessary.\\
%****** \\This  is a little unclear IMHO. Suggest we just rewrite Klimyk as you have later on in \eqref{NewTensorDimension} once zero signatures are introduced
%
%
%
%
%
%
%This addresses the points raised after our statement of Klimyk's formula (Theorem~\ref{theor:Klim}). Indeed, \eqref{TensorDimension} now simplifies to
\begin{align}\label{NewTensorDimension}
	\dim V(\lambda)\dim V(\kappa) = \sum_{\mu\in\Lambda(\kappa)}m_{\kappa}(\mu)\,D(\lambda+\mu).
\end{align}
This expression does not require any knowledge of the Weyl group. 
Furthermore, when computing the right hand sum of \eqref{NewTensorDimension}, if we first partition the summands based on the absolute value of $D(\lambda+\mu)$ and compute the sum over each part separately, we will have determined the number of components of a given dimension that appear in the decomposition of $V(\lambda)\otimes V(\kappa)$. While not enough to completely determine the decomposition, especially if $\mathfrak{g}$ has non-trivial outer automorphisms, it does greatly simplify the process.

We know from Klimyk's formula that the term in Okubo's formula associated with $\nu$ is non-zero only when $\nu\in\Lambda^{+}$ and is linked to $\lambda+\mu$ for some weight $\mu\in\Lambda(\kappa)$. That is, all non-zero $\xi_{\lambda,\kappa}(\nu)$ terms  are of the form $\xi_{\lambda,\kappa}\left(w \CDOT(\lambda+\mu)\right)$ for $w$ the linkage between $\lambda+\mu$ and a dominant integral element.
For $\mathfrak{g}$ simple, to study these expressions we extend $\xi_{\lambda,\kappa}:\Lambda^{+}\rightarrow\R$ to all of $\mathfrak{h}^{*}$ using \eqref{XiValueSimple}. Thus for $\tau\in\mathfrak{h}^{*}$ and $\lambda,\kappa\in\Lambda^{+}$ we define
\begin{align}\label{eq:Xi_defn}
	\xi_{\lambda,\kappa}(\tau) := \left(2rI(\kappa)\right)^{\shortminus1} \left((\tau+\rho)^{2}-(\lambda+\rho)^{2}-(\kappa+\rho)^{2}+\rho^{2}\right).
\end{align}

\begin{lemma}\label{lem:xi_invariance}
For $\mathfrak{g}$ simple, $\xi_{\lambda,\kappa}$ is invariant under the dot action of $W$.
\end{lemma}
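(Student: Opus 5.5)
The plan is to reduce the claim to the $W$-invariance of the Euclidean squared norm. In \eqref{eq:Xi_defn}, the only term depending on $\tau$ is $(\tau+\rho)^{2}$; the rest is a constant depending only on $\lambda$, $\kappa$, $\rho$ and the normalisation $2rI(\kappa)$. It therefore suffices to show that
\[
(w\CDOT\tau+\rho)^{2}=(\tau+\rho)^{2}
\]
for every $w\in W$ and $\tau\in\mathfrak{h}^{*}$.

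First, by the identity $w\CDOT\tau=w(\tau+\rho)-\rho$ noted after the definition of the dot action, we get $w\CDOT\tau+\rho=w(\tau+\rho)$. The required identity then becomes $(w(\tau+\rho))^{2}=(\tau+\rho)^{2}$.

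Second, since the embedding satisfies $\mu\cdot\nu=r\langle\mu,\nu\rangle$, we have $u^{2}=r\Vert u\Vert^{2}$ for $u\in\mathfrak{h}^{*}$. It is thus enough to know that the standard action of $W$ preserves $\langle\cdot,\cdot\rangle$. This was already used in the proof of Lemma~\ref{lem:NonZeroSignature}, and it can be checked directly on generators. From $s_{i}(\mu)=\mu-2\frac{\langle\alpha_{i},\mu\rangle}{\langle\alpha_{i},\alpha_{i}\rangle}\alpha_{i}$, expanding $\langle s_{i}\mu,s_{i}\mu\rangle$ gives
\[
\langle\mu,\mu\rangle-4\frac{\langle\alpha_{i},\mu\rangle^{2}}{\langle\alpha_{i},\alpha_{i}\rangle}+4\frac{\langle\alpha_{i},\mu\rangle^{2}}{\langle\alpha_{i},\alpha_{i}\rangle}=\langle\mu,\mu\rangle,
\]
and the general case follows since the $s_{i}$ generate $W$.

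Combining the two steps gives $\xi_{\lambda,\kappa}(w\CDOT\tau)=\xi_{\lambda,\kappa}(\tau)$. There is no genuine obstacle; the only point requiring care is to use the rewriting $w\CDOT\tau+\rho=w(\tau+\rho)$, so that the shift by $\rho$ cancels exactly against the $\rho$ inside $(\tau+\rho)^{2}$. Simplicity of $\mathfrak{g}$ is used only to make sense of the formula \eqref{eq:Xi_defn} through $r$ and $I(\kappa)$.
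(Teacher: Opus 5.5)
Your proposal is correct and follows the paper's proof: rewrite $w\CDOT\tau+\rho=w(\tau+\rho)$, use $u^{2}=r\Vert u\Vert^{2}$ and the fact that the standard action of $W$ preserves $\langle\cdot,\cdot\rangle$, then substitute into \eqref{eq:Xi_defn}. The only difference is that you check length-preservation explicitly on the simple reflections, whereas the paper simply cites it.
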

\begin{proof}
As the standard Weyl action is length preserving, for any $w\in W$ and $\tau\in\mathfrak{h}^{*}$, we have $w(\tau+\rho)^{2}=r\Vert w(\tau+\rho)\Vert^{2}=r\Vert\tau+\rho\Vert^{2}=(\tau+\rho)^{2}$. So
\begin{align*}
	\xi_{\lambda,\kappa}(w\CDOT\tau) &= (2rI(\kappa))^{\shortminus1}\left(w(\tau+\rho)^{2}-(\lambda+\rho)^{2}-(\kappa+\rho)^{2}+\rho^{2}\right)\\
	&=(2rI(\kappa))^{\shortminus1}\left((\tau+\rho)^{2}-(\lambda+\rho)^{2}-(\kappa+\rho)^{2}+\rho^{2}\right) \hspace{0.5em} = \xi_{\lambda,\kappa}(\tau).
\end{align*}
\end{proof}
Note Lemma~\ref{lem:xi_invariance} implies $\xi_{\lambda,\kappa}$ gives an invariant $S(\mathfrak{h})^{W}\rightarrow\mathbb{C}$ on the $W$-invariant subalgebra of the symmetric algebra on $\mathfrak{h}$. The Harish-Chandra isomorphism, $\phi: Z(U(\mathfrak{g}))\rightarrow S(\mathfrak{h})^{W}$, identifies the centre of $U(\mathfrak{g})$ with this $W$-invariant subalgebra via a twisted vector space projection \cite{Kn}. Thus any central character, that is the induced map $Z(U(\mathfrak{g}))\rightarrow\mathbb{C}$ for any representation of $\mathfrak{g}$ for which central elements of $U(\mathfrak{g})$ act by scalar multiplication - in particular irreducible representations by Schur's Lemma - can be identified with an invariant $S(\mathfrak{h})^{W}\rightarrow\mathbb{C}$, such as that defined by $\xi_{\lambda,\kappa}$.

The preceding results give us a way to calculate the Casimir eigenvalues  $c_{p}^{\kappa}(\lambda)$, when $\mathfrak{g}$ is simple, depending only on knowledge of $\Delta^{+}$ (in determining the $D(\nu)$ terms) and of the weights $\Lambda(\kappa)$, counting multiplicities. We do not require any direct knowledge of, or computation using, the Weyl group and thus bypass a significant computational barrier. We state this, for a given associated irreducible representation of highest weight $\kappa$ and an irreducible representation of highest weight $\lambda$ being acted upon by the generalised Casimir operator $K_{p}^{\kappa}$, in terms of an ordinary generating function defined by
%over all degrees as Okubo's formula readily permits. That is, for a given simple Lie algebra $\mathfrak{g}$ and $\lambda,\kappa\in\Lambda^{+}$ we define the formal sum:
\begin{align}\label{GenFuncDef}
	G_{\mathfrak{g}}^{\kappa}(\lambda,z):=\sum_{p\geq 0}c_{p}^{\kappa}(\lambda)z^{p}.
\end{align}
Let $\Chat:=\C \cup \{\infty\}$ denote the extended complex plane.
\begin{theorem}[The Casimir Eigenvalue Generating Function]\label{theor:CasGen}
For $\mathfrak{g}$ a finite-dimensional, simple, complex Lie algebra and dominant, integral elements $\lambda,\kappa\in\Lambda^{+}$, the generating function for the eigenvalues of the action of the generalised Casimirs associated with $V(\kappa)$ acting on $V(\lambda)$ is given by the following rational function of $z\in\Chat$ 
\begin{align}\label{eq:Gnew}
	G_{\mathfrak{g}}^{\kappa}(\lambda,z) =
	\dfrac{1}{\dim V(\lambda)}
	\sum_{\mu\in\Lambda(\kappa)}\dfrac{m_{\kappa}(\mu)\,D(\lambda+\mu)}{1-z\,\xi_{\lambda,\kappa}(\lambda+\mu) },
\end{align}	
for $D$ of \eqref{eq:D_defn} and  $ \xi_{\lambda,\kappa}$ of \eqref{eq:Xi_defn}.
%=\hspace{0.1em} \left(2rI(\kappa)\right)^{\shortminus1}\left((\tau+\rho)^{2}-(\lambda+\rho)^{2}-(\kappa+\rho)^{2}+\rho^{2}\right)\hspace{0.35em}$ for any choice of $\tau\in\Lambda$ and the Dynkin index is given by $I(\kappa) = D(\kappa)\langle\kappa,\kappa+2\rho\rangle/d$.
\end{theorem}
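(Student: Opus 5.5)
We will combine Okubo's formula (Theorem~\ref{theor:Ok}) with Klimyk's formula (Theorem~\ref{theor:Klim}), and then replace each Weyl-group–dependent quantity by its extension to $\mathfrak{h}^{*}$. First, for each fixed $p\ge 0$, Okubo's formula gives
\begin{align*}
\dim V(\lambda)\,c_{p}^{\kappa}(\lambda)=\sum_{\nu\in\Lambda^{+}} n_{V(\lambda)\otimes V(\kappa)}(\nu)\dim V(\nu)\,\xi_{\lambda,\kappa}(\nu)^{p}.
\end{align*}
The case $p=0$ is the dimension identity \eqref{NewTensorDimension}, which is consistent with $c_{0}^{\kappa}(\lambda)=\dim V(\kappa)$. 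We then substitute Klimyk's expression for $n_{V(\lambda)\otimes V(\kappa)}(\nu)$. This turns the right-hand side into a double sum over $\nu\in\Lambda^{+}$ and over $\mu\in\Lambda(\kappa)$ with $\lambda+\mu\sim\nu$, with summand $m_{\kappa}(\mu)\sgn(\lambda+\mu)\dim V(\nu)\,\xi_{\lambda,\kappa}(\nu)^{p}$.

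The key step is to rewrite each summand purely in terms of $\lambda+\mu$. For $\mu$ with $\lambda+\mu\sim\nu\in\Lambda^{+}$, Corollary~\ref{cor:DLambda} gives $\sgn(\lambda+\mu)\dim V(\nu)=D(\lambda+\mu)$. Since $\lambda+\mu=w\CDOT\nu$, Lemma~\ref{lem:xi_invariance} gives $\xi_{\lambda,\kappa}(\nu)=\xi_{\lambda,\kappa}(\lambda+\mu)$; here we use that $\xi_{\lambda,\kappa}$ on $\Lambda^{+}$ agrees with the extension \eqref{eq:Xi_defn}, by \eqref{XiValueSimple}. Next we note that each $\mu\in\Lambda(\kappa)$ with $\sgn(\lambda+\mu)\neq0$ is linked to exactly one $\nu\in\Lambda^{+}$. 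This holds because $\lambda+\mu\in\Lambda$ and by Remark~\ref{rem:SignatureRemarks}(ii), together with uniqueness of the dominant representative of a dot-orbit in $\FWC-\rho$. The double sum therefore collapses to a single sum over such $\mu$. For the remaining $\mu$, those with $\sgn(\lambda+\mu)=0$, Lemma~\ref{lem:ZeroSignature} gives $D(\lambda+\mu)=0$, so they may be added freely. We obtain
\begin{align*}
c_{p}^{\kappa}(\lambda)=\frac{1}{\dim V(\lambda)}\sum_{\mu\in\Lambda(\kappa)} m_{\kappa}(\mu)\,D(\lambda+\mu)\,\xi_{\lambda,\kappa}(\lambda+\mu)^{p},
\end{align*}
valid for all $p\ge0$. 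For $p=0$ we adopt the convention $0^{0}=1$, which matches the $\id$ normalisation of $K_{0}^{\kappa}$.

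Finally we multiply by $z^{p}$ and sum over $p$. The sum over $\mu$ is finite, so we may interchange it with the formal sum over $p$. Each geometric series $\sum_{p}(z\,\xi_{\lambda,\kappa}(\lambda+\mu))^{p}$ equals $(1-z\,\xi_{\lambda,\kappa}(\lambda+\mu))^{-1}$, which yields \eqref{eq:Gnew}. This is first an identity of formal power series, or of convergent series for $|z|$ small. The right-hand side is a finite sum of rational functions, so it defines a rational function on $\Chat$ by analytic continuation. This is the sense in which $G_{\mathfrak{g}}^{\kappa}(\lambda,z)$ is asserted to be rational.

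The main obstacle is the bookkeeping in the collapse of the double sum. We must make sure that every term of Klimyk's formula is matched to exactly one $\mu$, with the correct sign absorbed into $D$. We must also check that the non-dominant-linked terms contribute zero both to the multiplicities and after the extension; the latter is guaranteed by $D(\lambda+\mu)=0$ for them. Apart from this, the argument is a direct assembly of Theorem~\ref{theor:Ok}, Theorem~\ref{theor:Klim}, Corollary~\ref{cor:DLambda} and Lemma~\ref{lem:xi_invariance}.
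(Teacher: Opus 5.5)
Your proposal is correct and follows essentially the same route as the paper: you substitute Klimyk's multiplicities into Okubo's formula, use Corollary~\ref{cor:DLambda} and Lemma~\ref{lem:xi_invariance} to rewrite each summand in terms of $\lambda+\mu$, add the terms with $D(\lambda+\mu)=0$, and sum the geometric series. Your explicit remarks on the uniqueness of the dominant element linked to $\lambda+\mu$ and on the $0^{0}=1$ convention make precise details that the paper leaves implicit.
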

\begin{proof}
From Klimyk's and Okubo's formulas (Theorems~\ref{theor:Klim} and \ref{theor:Ok}) we find
\begin{align*}
	c_{p}^{\kappa}(\lambda)&=
	\sum_{\nu\in\Lambda^{+}}\hspace{0.75em} \smashoperator{\sum_{\substack{\mu\in\Lambda(\kappa) \\ \lambda+\mu\sim\nu}}}\hspace{0.5em} 
	\dfrac{m_{\kappa}(\mu)\, \sgn(\lambda+\mu)\,D(\nu)}{\dim V(\lambda)}\; \left(\xi_{\lambda,\kappa}(\nu)\right)^{p}.
\end{align*}
By applying Corollary~\ref{cor:DLambda} and Lemma~\ref{lem:xi_invariance}, this expression then becomes
\begin{align*}
	c_{p}^{\kappa}(\lambda)&=
	\hspace{0.75em} \smashoperator{\sum_{\substack{\mu\in\Lambda(\kappa) \\ \lambda+\mu\in W\CDOT\Lambda^{+}}}}\hspace{0.5em} 
	\dfrac{m_{\kappa}(\mu)\, D(\lambda+\mu)}{\dim V(\lambda)}\; \left(\xi_{\lambda,\kappa}(\lambda+\mu)\right)^{p}.
\end{align*}
Since $D(\lambda+\mu)=0$ when $\lambda+\mu\notin W\CDOT\Lambda^{+}$, by Corollary~\ref{cor:DLambda}, we have
\begin{align*}
	c_{p}^{\kappa}(\lambda)&=
%	\dfrac{1}	{\dim V(\lambda)}
	\hspace{0.75em} \smashoperator{\sum_{\mu\in\Lambda(\kappa)}}\hspace{0.5em} \dfrac{m_{\kappa}(\mu)\, D(\lambda+\mu)}{\dim V(\lambda)}\; \left(\xi_{\lambda,\kappa}(\lambda+\mu)\right)^{p}.
\end{align*}
Thus $G_{\mathfrak{g}}^{\kappa}(\lambda,z)$ of \eqref{GenFuncDef} is the rational function of $z\in \Chat$ given by \eqref{eq:Gnew}.
\end{proof}
The summation in \eqref{eq:Gnew} is computationally manageable provided that $\Lambda(\kappa)$ is not too large. Note that, if all that is desired is the generating function for some specific $\lambda$ and $\kappa$, it may be more efficient, when $\vert\Lambda(\lambda)\vert<\vert\Lambda(\kappa)\vert$, to swap their roles and apply the reciprocity formula of Lemma~\ref{lem:recip}. In fact, the definition \eqref{GenFuncDef} and  Lemma~\ref{lem:recip} imply the following generating function relation:
\begin{corollary}
For $\mathfrak{g}$ simple, we have $\,G_{\mathfrak{g}}^{\kappa}(\lambda,z) \,=\, 
\tfrac{\dim V(\kappa)}	{\dim V(\lambda)}
\,G_{\mathfrak{g}}^{\lambda}\left(\kappa,\tfrac{I(\lambda)}{I(\kappa)}\hspace{0.125em}z\right)$. 
%$\dim V(\lambda)\,G_{\mathfrak{g}}^{\kappa}(\lambda,I(\kappa)z) \,=\, \dim V(\kappa)\,G_{\mathfrak{g}}^{\lambda}(\kappa,I(\lambda)z)$.
\end{corollary}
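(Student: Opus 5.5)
The plan is to derive the identity coefficientwise from Lemma~\ref{lem:recip}, and then to rescale the variable using the Dynkin index formula \eqref{DynkinIndexValue}. The first step rewrites the ratio appearing in Lemma~\ref{lem:recip} in terms of Dynkin indices. By \eqref{DynkinIndexValue}, $\kappa\cdot(\kappa+2\rho)=rdI(\kappa)/\dim V(\kappa)$, and similarly for $\lambda$. Hence
\begin{align*}
\dfrac{\lambda\cdot(\lambda+2\rho)}{\kappa\cdot(\kappa+2\rho)}=\dfrac{I(\lambda)\dim V(\kappa)}{I(\kappa)\dim V(\lambda)}.
\end{align*}
Substituting this into Lemma~\ref{lem:recip}, the powers of the dimension ratio collapse, since $(\dim V(\lambda)/\dim V(\kappa))^{p-1}(\dim V(\kappa)/\dim V(\lambda))^{p}=\dim V(\kappa)/\dim V(\lambda)$. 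This gives, for every $p\ge0$,
\begin{align*}
c_{p}^{\kappa}(\lambda)=\dfrac{\dim V(\kappa)}{\dim V(\lambda)}\left(\dfrac{I(\lambda)}{I(\kappa)}\right)^{p}c_{p}^{\lambda}(\kappa).
\end{align*}
This is also immediate from the intermediate identity $I(\kappa)^{p}\dim V(\lambda)\,c_{p}^{\kappa}(\lambda)=I(\lambda)^{p}\dim V(\kappa)\,c_{p}^{\lambda}(\kappa)$ in the proof of Lemma~\ref{lem:recip}. That route is cleaner, and it is the one I would cite.

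The second step multiplies by $z^{p}$ and sums over $p\ge0$ in the definition \eqref{GenFuncDef}. The right-hand side is then $\tfrac{\dim V(\kappa)}{\dim V(\lambda)}\sum_{p}c_{p}^{\lambda}(\kappa)\bigl(\tfrac{I(\lambda)}{I(\kappa)}z\bigr)^{p}$, which is $\tfrac{\dim V(\kappa)}{\dim V(\lambda)}G_{\mathfrak{g}}^{\lambda}\bigl(\kappa,\tfrac{I(\lambda)}{I(\kappa)}z\bigr)$ as a formal power series. By Theorem~\ref{theor:CasGen}, both sides are rational functions whose power series expansions about $z=0$ agree, so they agree as rational functions on $\Chat$.

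The only point needing care is the degenerate case where $I(\kappa)=0$, which happens exactly when $\kappa=0$; then the division by $I(\kappa)$ is undefined. I would note that the statement implicitly requires $\lambda,\kappa\neq0$, which is also needed in Okubo's setup because $\widetilde{X}^{\kappa}_i$ must exist. The case $p=0$ is consistent, since $c_0^{\kappa}(\lambda)=\dim V(\kappa)$ and $\tfrac{\dim V(\kappa)}{\dim V(\lambda)}c_0^{\lambda}(\kappa)=\dim V(\kappa)$. No step presents a real obstacle.
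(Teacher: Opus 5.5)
Your proof is correct and follows the paper's route: the paper obtains the corollary in one line from the definition \eqref{GenFuncDef} together with Lemma~\ref{lem:recip}. Your coefficientwise identity $c_{p}^{\kappa}(\lambda)=\tfrac{\dim V(\kappa)}{\dim V(\lambda)}\bigl(\tfrac{I(\lambda)}{I(\kappa)}\bigr)^{p}c_{p}^{\lambda}(\kappa)$, obtained via \eqref{DynkinIndexValue} or the intermediate trace identity, and your remarks on $p=0$ and the implicit requirement $\lambda,\kappa\neq0$ correctly spell out the details the paper leaves implicit.
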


From Theorem~\ref{theor:CasGen} we find that the rational function $G_{\mathfrak{g}}^{\kappa}(\lambda,z)$ has a simple pole expansion in $z$ and converges at the point at infinity.
We use this fact  to directly derive previously known closed form generating functions due to Perelomov and Popov \cite{PP} in the case of the classical Lie algebras for generalised Casimir operators associated with the first fundamental representation $\kappa=\varpi_{1}$. To demonstrate this, and in the interest of examining particular generating functions, we define the following useful pieces of notation. For  $\lambda,\kappa\in\Lambda^{+}$, $\mu\in\Lambda(\kappa)$ and $z\in\Chat$, we define 
\begin{align}
	\ell&:=\lambda+\rho,\label{eq:ell}\\[6pt]
	\psi^{\kappa}(\mu)&:=\kappa\cdot\rho+\tfrac{1}{2}(\kappa^{2}-\mu^{2}),\label{eq:psi}\\[6pt]
	\zeta&:=\dfrac{rI(\kappa)}{z}+\kappa\cdot\rho.\label{eq:zeta}
\end{align}
We then find that
\begin{align*}
	\xi_{\lambda,\kappa}(\lambda+\mu) 
&= \left(2rI(\kappa)\right)^{\shortminus1}\left((\lambda+\mu+\rho)^{2}-(\lambda+\rho)^{2}-(\kappa+\rho)^{2}+\rho^{2}\right)\\
%	&=\left(rI(\kappa)\right)^{\shortminus1}\left((\lambda+\rho)\cdot\mu+\tfrac{1}{2}\mu^{2}-\tfrac{1}{2}\kappa^{2}-\kappa\cdot\rho\right)\\
	&=\left(rI(\kappa)\right)^{\shortminus1}\left(\ell\cdot\mu-\psi^{\kappa}(\mu)\right).
\end{align*}
If $\Lambda(\kappa)=W(\kappa)$, such as  for $\kappa=\varpi_{1}$ for Lie algebras of type $A_{n}$, $C_{n}$, $D_{n}$, $E_{6}$ or $E_{7}$, then we have $\mu^{2}=\kappa^{2}$ and $m_{\kappa}(\mu)=1$ for all $\mu\in\Lambda(\kappa)$. So $\psi^{\kappa}(\mu)=\kappa\cdot\rho$ and thus
\begin{align*}
	\dfrac{z}{rI(\kappa)}G_{\mathfrak{g}}^{\kappa}(\lambda,z) &= \sum_{\mu\in\Lambda(\kappa)}\dfrac{z\,m_{\kappa}(\mu)}{rI(\kappa)-\left(\ell\cdot\mu-\psi^{\kappa}(\mu)\right)z}\,\dfrac{D(\lambda+\mu)}{D(\lambda)}\\[4pt]
	&=\sum_{\mu\in W(\kappa)}\dfrac{1}{\zeta-\ell\cdot\mu}\hspace{0.75em}\smashoperator{\prod_{\alpha\in\Delta^{+}}}\hspace{0.125em}\dfrac{(\ell+\mu)\cdot\alpha}{\ell\cdot\alpha}.
\end{align*}
If $\Lambda(\kappa)=W(\kappa)\cup\lbrace0\rbrace$, such as for $\kappa=\varpi_{1}$ for Lie algebras of type $B_{n}$, $E_{8}$, $F_{4}$ or $G_{2}$, there is only a single additional $\mu=0$ weight to consider beyond the Weyl orbit of $\kappa$.  The summand associated with $\mu=0$ in $\tfrac{z}{rI(\kappa)}G_{\mathfrak{g}}^{\kappa}(\lambda,z)$ is
\begin{align*}
	\dfrac{z\,m_{\kappa}(0)}{rI(\kappa)+\left(\kappa\cdot\rho+\tfrac{1}{2}\kappa^{2}\right)z}\hspace{0.5em}\dfrac{D(\lambda)}{D(\lambda)}\, = \,\dfrac{m_{\kappa}(0)}{\zeta+\tfrac{1}{2}\kappa^{2}}.
\end{align*}
The last couple of expressions can be summarised by the following:
\begin{corollary}\label{cor:GenWkappa}
For $\mathfrak{g}$ simple with $\Lambda(\kappa)\subset W(\kappa)\cup\lbrace0\rbrace$, we have that
\begin{align*}
		\dfrac{z}{rI(\kappa)}G_{\mathfrak{g}}^{\kappa}(\lambda,z) =
	  \dfrac{m_{\kappa}(0)}{\zeta+\tfrac{1}{2}\kappa^{2}}\, + \sum_{\mu\in W(\kappa)}\dfrac{1}{\zeta-\ell\cdot\mu}\hspace{0.75em}\smashoperator{\prod_{\alpha\in\Delta^{+}}}\hspace{0.125em}\dfrac{(\ell+\mu)\cdot\alpha}{\ell\cdot\alpha}.
\end{align*}
\end{corollary}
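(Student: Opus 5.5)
We start from Theorem~\ref{theor:CasGen} and multiply \eqref{eq:Gnew} by $z/(rI(\kappa))$. The hypothesis $\Lambda(\kappa)\subset W(\kappa)\cup\lbrace0\rbrace$ lets us split the sum over $\mu\in\Lambda(\kappa)$ into two parts: the weights in the Weyl orbit $W(\kappa)$, and possibly the single weight $\mu=0$, which contributes only when $m_{\kappa}(0)\neq0$. If $0\notin\Lambda(\kappa)$ we read $m_{\kappa}(0)=0$, so the corresponding term vanishes.

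For $\mu\in W(\kappa)$ we use two facts about weights in the orbit of the highest weight. First, $m_{\kappa}(\mu)=m_{\kappa}(\kappa)=1$, since multiplicities are constant on standard Weyl orbits. Second, $\mu^{2}=\kappa^{2}$, since the standard action preserves the form. Hence $\psi^{\kappa}(\mu)=\kappa\cdot\rho$, and by the computation preceding the corollary,
\begin{align*}
\xi_{\lambda,\kappa}(\lambda+\mu)=\left(rI(\kappa)\right)^{\shortminus1}\left(\ell\cdot\mu-\kappa\cdot\rho\right).
\end{align*}
Multiplying numerator and denominator by $1/z$ turns the factor $z/(rI(\kappa))\cdot(1-z\xi_{\lambda,\kappa}(\lambda+\mu))^{-1}$ into $(rI(\kappa)/z+\kappa\cdot\rho-\ell\cdot\mu)^{-1}=(\zeta-\ell\cdot\mu)^{-1}$, by the definition \eqref{eq:zeta} of $\zeta$.

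For the ratio of dimensions we use $D(\lambda)=\dim V(\lambda)$ and the definition \eqref{eq:D_defn}. In $D(\lambda+\mu)/D(\lambda)$ the denominators $\langle\rho,\alpha\rangle$ cancel, leaving $\prod_{\alpha\in\Delta^{+}}\langle\ell+\mu,\alpha\rangle/\langle\ell,\alpha\rangle$. Since $\langle\cdot,\cdot\rangle=r^{-1}(\cdot)$ with the Euclidean dot product, the factors of $r$ cancel in each quotient, and the ratio equals $\prod_{\alpha\in\Delta^{+}}(\ell+\mu)\cdot\alpha/(\ell\cdot\alpha)$. Here $\ell\cdot\alpha\neq0$ because $\ell=\lambda+\rho$ is strongly dominant.

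For $\mu=0$ the dimension ratio is $1$. We have $\psi^{\kappa}(0)=\kappa\cdot\rho+\tfrac12\kappa^{2}$, so $\xi_{\lambda,\kappa}(\lambda)=-(rI(\kappa))^{-1}(\kappa\cdot\rho+\tfrac12\kappa^{2})$. The same manipulation as above gives the term $m_{\kappa}(0)/(\zeta+\tfrac12\kappa^{2})$. Adding the two contributions yields the stated identity as rational functions of $z$.

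There is no real obstacle here. The only points needing care are the constancy of multiplicity and norm on $W(\kappa)$, the cancellation of $r$ in converting $\langle\cdot,\cdot\rangle$ to the dot product, and the convention $m_{\kappa}(0)=0$ when $0$ is not a weight. Everything else follows directly from Theorem~\ref{theor:CasGen}.
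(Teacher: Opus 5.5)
Your proposal is correct and follows the paper's own derivation. Like the paper, you specialise Theorem~\ref{theor:CasGen} via $\xi_{\lambda,\kappa}(\lambda+\mu)=(rI(\kappa))^{-1}(\ell\cdot\mu-\psi^{\kappa}(\mu))$ with $\psi^{\kappa}(\mu)=\kappa\cdot\rho$ and $m_{\kappa}(\mu)=1$ on $W(\kappa)$, rewrite each factor $\tfrac{z}{rI(\kappa)}(1-z\,\xi_{\lambda,\kappa}(\lambda+\mu))^{-1}$ in terms of $\zeta$, and treat the $\mu=0$ weight separately to obtain $m_{\kappa}(0)/(\zeta+\tfrac{1}{2}\kappa^{2})$; your extra remarks on the cancellation of $r$, $\ell\cdot\alpha\neq0$, and the convention $m_{\kappa}(0)=0$ only make explicit what the paper leaves implicit.
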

We will shortly apply Corollary~\ref{cor:GenWkappa} to the classical Lie algebras to derive closed form expressions for the generating functions of generalised Casimir operators associated with the first  fundamental representation $V(\varpi_{1})$. These generating functions were originally due to Perelomov and Popov \cite{PP} but were computed from detailed triangularisation of representation matrices whereas Corollary~\ref{cor:GenWkappa} gives a more direct approach. 

Consider $\kappa=\varpi_{1}$, the highest weight  for the first fundamental representation. Since  $G_{\mathfrak{g}}^{\varpi_{1}}(\lambda,z)$ is a power series in $z$, we note
\begin{align*}
	\lim_{z\rightarrow0}\tfrac{z}{rI(\varpi_{1})}G_{\mathfrak{g}}^{\varpi_{1}}(\lambda,z)=\lim_{\vert\zeta\vert\rightarrow\infty}\tfrac{z}{rI(\varpi_{1})}G_{\mathfrak{g}}^{\varpi_{1}}(\lambda,z)=0.
\end{align*}
Suppose that there exists $f(\zeta)$ with the same simple poles and residues as  $\tfrac{z}{rI(\varpi_{1})}G_{\mathfrak{g}}^{\varpi_{1}}(\lambda,z)$  and which satisfies $\lim_{\vert\zeta\vert\rightarrow\infty}f(\zeta)=0$. Then we can conclude that
\begin{align*}
	G_{\mathfrak{g}}^{\varpi_{1}}(\lambda,z) = \left(\zeta-\varpi_{1}\cdot\rho\right)f(\zeta) = \dfrac{rI(\varpi_{1})}{z}f\left(\dfrac{rI(\varpi_{1})}{z}+\varpi_{1}\cdot\rho\right).
\end{align*}
\begin{remark}\label{rem:GenFuncProperties}
If there exists $\mu\in\Lambda(\varpi_{1})$ for which $D(\lambda+\mu)\neq 0$ and $\xi_{\lambda,\kappa}(\lambda+\mu)=0$ (or equivalently, $\psi^{\varpi_{1}}(\mu)=\ell\cdot\mu$), then $G_{\mathfrak{g}}^{\varpi_{1}}(\lambda,z)$ is non-zero at the point at infinity implying that $f(\zeta)$ has a simple pole at $\zeta=\varpi_{1}\cdot\rho$.
\end{remark}
The pursuit of such an $f(\zeta)$ is the approach we now take in order to find closed expressions for classical $\mathfrak{g}$ which reproduce results of Perelomov and Popov \cite{PP}. 
Tables~\ref{tab:g_data} and \ref{tab:pomega1_data} below provide the relevant data to apply Corollary~\ref{cor:GenWkappa} to any simple Lie algebra for $\kappa=\varpi_{1}$. For the  Euclidean embeddings of Table~\ref{tab:EucEmbed}, we find $\rho_{i+1}=\rho_{i}-1$ for the classical Lie algebras. We also recall the Dynkin index given by $rI(\varpi_{1})=\tfrac{1}{d}\kappa\cdot(\varpi_{1}+2\rho)\dim V(\varpi_{1})$ from \eqref{DynkinIndexValue}. 

%\vspace{1ex}

\begin{table}[ht!]
	{\setlength{\extrarowheight}{3pt}\begin{tabular}{|@{\hspace{0.45em}}c@{\hspace{0.45em}}|@{\hspace{0.9em}}c@{\hspace{0.9em}}|@{\hspace{0.95em}}c@{\hspace{0.95em}}|@{\hspace{1.05em}}c@{\hspace{1.05em}}|@{\hspace{1.25em}}c@{\hspace{1.25em}}|@{\hspace{1.25em}}c@{\hspace{1.25em}}|}\hline
			Type & $\mathfrak{g}$ & $k$ & $\rho$ & $d$ & $r$ \\ \hline\hline
			$A_{n}$ & $\mathfrak{sl}_{n+1}$ & $n+1$ & $\tfrac{n}{2}e_{1}+\tfrac{(n-2)}{2}e_{2}+\cdots-\tfrac{n}{2}e_{n+1}$ & $n(n+2)$ & $1$ \\ \hline
			$B_{n}$ & $\mathfrak{so}_{2n+1}$ & $n$ & $\left(\tfrac{2n-1}{2}\right)e_{1}+\left(\tfrac{2n-3}{2}\right)e_{2}+\cdots+\tfrac{1}{2}e_{n}$ & $n(2n+1)$ & $1$ \\ \hline
			$C_{n}$ & $\mathfrak{sp}_{2n}$ & $n$ & $ne_{1}+(n-1)e_{2}+\cdots+e_{n}$ & $n(2n+1)$ & $2$ \\ \hline
			$D_{n}$ & $\mathfrak{so}_{2n}$ & $n$ & $(n-1)e_{1}+(n-2)e_{2}+\cdots+e_{n-1}$ & $n(2n-1)$ & $1$ \\ \hline
			$E_{6}$ & $\mathfrak{e_{6}}$ & $8$ & $\left(4,3,2,1,0,-4,-4,-4\right)$ & $78$ & $1$ \\ \hline
			$E_{7}$ & $\mathfrak{e_{7}}$ & $8$ & $\left(5,4,3,2,1,0,-\tfrac{17}{2},-\tfrac{17}{2}\right)$ & $133$ & $1$ \\ \hline
			$E_{8}$ & $\mathfrak{e_{8}}$ & $8$ & $\left(6,5,4,3,2,1,0,-23\right)$ & $248$ & $1$ \\ \hline
			$F_{4}$ & $\mathfrak{f_{4}}$ & $4$ & $\left(3,2,1,-8\right)$ & $52$ & $2$ \\ \hline
			$G_{2}$ & $\mathfrak{g_{2}}$ & $3$ & $\left(2,1,-3\right)$ & $14$ & $3$ \\ \hline
	\end{tabular}}
	\caption{Summary of data on simple Lie algebras.}\label{tab:g_data}
\end{table}

%\vspace{1ex}

\begin{table}[ht!]
	{\setlength{\extrarowheight}{3pt}\begin{tabular}{|@{\hspace{0.45em}}c@{\hspace{0.45em}}|@{\hspace{0.9em}}c@{\hspace{0.9em}}|@{\hspace{0.75em}}c@{\hspace{0.75em}}|@{\hspace{0.75em}}c@{\hspace{0.75em}}|@{\hspace{0.75em}}c@{\hspace{0.75em}}|@{\hspace{0.75em}}c@{\hspace{0.75em}}|@{\hspace{0.75em}}c@{\hspace{0.75em}}|}\hline
			Type & $\varpi_{1}$ & $D(\varpi_{1})$ & $m_{\varpi_{1}}(0)$& $\varpi_{1}\cdot \rho$ & $\varpi_{1}\cdot(\varpi_{1}+2\rho)$ & $ rI(\varpi_{1})$ \\ \hline\hline
			$A_{n}$ &  $e_{1}-\tfrac{1}{n+1}\mathbbold{1}_{n+1}$ & $n+1$ & $0$ & $\tfrac{n}{2}$ & $\tfrac{n(n+2)}{n+1}$ & $1$ \\ \hline
			$B_{n}$ &  $e_{1}$ & $2n+1$ & $1$ & $n-\tfrac{1}{2}$ & $2n$ & $2$ \\ \hline
			$C_{n}$ &  $e_{1}$ & $2n$ & $0$ & $n$ & $2n+1$ & $2$ \\ \hline
			$D_{n}$ &  $e_{1}$ & $2n$ & $0$ & $n-1$ & $2n -1$ & $2$ \\ \hline
			$E_{6}$ & $\left(1,\mathbf{0}_{4},-\tfrac{1}{3},-\tfrac{1}{3},-\tfrac{1}{3}\right)$ & $27$ & $0$ & $8$ & $\tfrac{52}{3}$ & $6$ \\ \hline
			$E_{7}$ & $\left(1,\mathbf{0}_{5},-\tfrac{1}{2},-\tfrac{1}{2}\right)$ & $56$ & $0$ & $\tfrac{27}{2}$ & $\tfrac{57}{2}$ & $12$ \\ \hline
			$E_{8}$ & $\left(1,\mathbf{0}_{6},-1\right)$ & $248$ & $8$ & $29$ & $60$ & $60$ \\ \hline
			$F_{4}$ & $\left(1,\mathbf{0}_{2},-1\right)$ & $26$ & $2$ & $11$ & $24$ & $12$ \\ \hline
			$G_{2}$ & $\left(1,0,-1\right)$ & $7$ & $1$ & $5$ & $12$ & $6$ \\ \hline
	\end{tabular}}
	\caption{Summary of  $V(\varpi_{1})$ data for simple Lie algebras.
		$\mathbf{0}_{m} $ denotes the $m$-tuple of zeros.} \label{tab:pomega1_data}
\end{table}

We now describe $G_{\mathfrak{g}}^{\varpi_{1}}(\lambda,z)$ for the classical Lie algebras.
Let $\ell_{i}$ denote the $i^{\text{th}}$ Euclidean coordinate of the vector $\ell=\lambda+\rho$ for our choice of embedding of $\mathfrak{h}^{*}$. We also define  $\ell_{-i}:=-\ell_{i}$ for $1\leq i\leq n$ for $\mathfrak{g}$ of type $B_{n},C_{n}$ and $D_{n}$ and  $\ell_{0}:=0$ for $\mathfrak{g}$ of type $B_{n}$. 
\begin{corollary}\label{cor:GenPomega1}
For $\mathfrak{g}$ of classical type $A_{n}$, $B_{n}$, $C_{n}$ and $D_{n}$, then  $G_{\mathfrak{g}}^{\varpi_{1}}(\lambda,z)$ is given by
\begin{align*}
	G_{\mathfrak{sl}_{n+1}}^{\varpi_{1}}(\lambda,z) &= \dfrac{1}{z}\left(\prod_{i=1}^{n+1}\left(1+\dfrac{z}{1-(\ell_{i}-\tfrac{n}{2})z}\right)-1\right),\\
	\text{ }\\
	G_{\mathfrak{so}_{2n+1}}^{\varpi_{1}}(\lambda,z) &= \dfrac{2}{z}\left(1-\dfrac{z}{4+2nz}\right)\left(\prod_{i=-n}^{n}\left(1+\dfrac{z}{2-(\ell_{i}-n+\tfrac{1}{2})z}\right)-1\right),\\
	\text{ }\\
	G_{\mathfrak{sp}_{2n}}^{\varpi_{1}}(\lambda,z) &= \dfrac{2}{z}\left(1+\dfrac{z}{4+(2n+1)z}\right)\left(\sideset{}{'}\prod_{i=-n}^{n}\left(1+\dfrac{z}{2-(\ell_{i}-n)z}\right)-1\right),\\
	\text{ }\\
	G_{\mathfrak{so}_{2n}}^{\varpi_{1}}(\lambda,z) &= \dfrac{2}{z}\left(1-\dfrac{z}{4+(2n-1)z}\right)\left(\sideset{}{'}\prod_{i=-n}^{ n}\left(1+\dfrac{z}{2-(\ell_{i}-n+1)z}\right)-1\right),
	\end{align*}
	where the primed products exclude the  $i=0$ index.
\end{corollary}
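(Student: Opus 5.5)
The plan is to apply Corollary~\ref{cor:GenWkappa} with $\kappa=\varpi_{1}$ and use the pole-matching principle stated just before Remark~\ref{rem:GenFuncProperties}. For each classical type I will write down an explicit candidate $f(\zeta)$, namely the bracketed expression of the claimed formula rewritten in the variable $\zeta$ of \eqref{eq:zeta}. I will then check three things:
\begin{enumerate}
\item[(a)] $f(\zeta)\to0$ as $\vert\zeta\vert\to\infty$;
\item[(b)] $f$ has exactly the simple poles of $\tfrac{z}{rI(\varpi_{1})}G_{\mathfrak{g}}^{\varpi_{1}}(\lambda,z)$, with the same residues;
\item[(c)] $f$ has no other poles, or any extra apparent pole has zero residue.
\end{enumerate}
The difference of the two rational functions of $\zeta$ is then entire and vanishes at infinity, so it is zero. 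Finally $G=\tfrac{rI(\varpi_1)}{z}f(\zeta)$, and substituting the data of Table~\ref{tab:pomega1_data} should give the stated closed forms. For instance, in type $A_n$ we have $rI=1$ and $\zeta=1/z+\tfrac n2$, so $1/(\zeta-\ell_i)=z/(1-(\ell_i-\tfrac n2)z)$. For $B,C,D$ we have $rI=2$ and $\varpi_1\cdot\rho=n-\tfrac12,\,n,\,n-1$ respectively.

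For $A_n$ I will normalise $\ell$ to have coordinate sum zero, which is possible since $\rho$ and $\lambda$ lie in the hyperplane $\mathbbold{1}_{n+1}^{\perp}$. The weights $\mu\in W(\varpi_{1})$ are $e_i-\tfrac1{n+1}\mathbbold{1}_{n+1}$, so $\ell\cdot\mu=\ell_i$. The positive roots $e_j-e_k$ with $j<k$ give
\[
\prod_{\alpha\in\Delta^{+}}\frac{(\ell+\mu)\cdot\alpha}{\ell\cdot\alpha}=\prod_{j\neq i}\frac{\ell_i-\ell_j+1}{\ell_i-\ell_j}.
\]
The candidate is $f(\zeta)=\prod_{j}\bigl(1+\tfrac{1}{\zeta-\ell_j}\bigr)-1$. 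Its residue at $\zeta=\ell_i$ is exactly the product above, and it vanishes at infinity, so type $A_n$ is done.

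For $B_n,C_n,D_n$ the orbit $W(\varpi_{1})$ is $\{\pm e_i\}$, so $\ell\cdot\mu=\ell_{\pm i}$. Type $B_n$ also has the zero weight with $m_{\varpi_1}(0)=1$, which contributes a pole at $\zeta=-\tfrac12\kappa^2=-\tfrac12$ with residue $1$. For $\mu=e_i$ I will sort the positive roots $e_j\pm e_k$ (together with $e_j$ in type $B$ and $2e_j$ in type $C$) by whether they involve $e_i$. The product should become
\[
\prod_{j\neq \pm i}\frac{\ell_i-\ell_j+1}{\ell_i-\ell_j}
\]
over $j\in\{\pm1,\dots,\pm n\}$ (including $j=0$ in type $B$), times a correction factor coming from the short or long roots involving only $e_i$. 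The naive product $\prod'_j\bigl(1+\tfrac1{\zeta-\ell_j}\bigr)$ evaluated at $\zeta=\ell_i$ contains the spurious factor $(2\ell_i+1)/(2\ell_i)$ from $j=-i$. The prefactors $1\mp\tfrac{z}{4+\cdots}$ in the statement translate to $1\mp\tfrac1{2\zeta+c}$, and I expect them to cancel exactly this discrepancy. For $D_n$, for example, $1-\tfrac{z}{4+(2n-1)z}=\tfrac{2\zeta}{2\zeta+1}$, which equals $\tfrac{2\ell_i}{2\ell_i+1}$ at $\zeta=\ell_i$. The same check applies at $\zeta=-\ell_i$ for $\mu=-e_i$, using the symmetry $\ell_{-i}=-\ell_i$.

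The main obstacle is the bookkeeping in (c) for the extra pole introduced by the prefactor. In type $D$ (and similarly $C$) the prefactor has a pole at $\zeta=-\tfrac12$ which must have zero residue. I will verify this by pairing the factors for $j$ and $-j$: at $\zeta=-\tfrac12$ each pair contributes
\[
\frac{(\tfrac12-\ell_j)(\tfrac12+\ell_j)}{(-\tfrac12-\ell_j)(-\tfrac12+\ell_j)}=1,
\]
so the bracketed factor vanishes there. In type $B$ the same pole must instead carry residue $m_{\varpi_1}(0)=1$ from the zero weight. Here the extra $j=0$ factor $\tfrac{\zeta+1}{\zeta}$ breaks the cancellation, and I will compute the residue directly. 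Type $C$, where $r=2$ and the long roots $2e_i$ change the correction factor, is the case I would check most carefully, including the sign in its prefactor.
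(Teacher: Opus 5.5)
Your strategy is the paper's: apply Corollary~\ref{cor:GenWkappa}, use the bracketed closed forms (rewritten in $\zeta$) as the candidate $f$, and match simple poles and residues. Your computations are correct in the generic configuration. The prefactor $1\mp\tfrac{1}{2\zeta+1}$ cancels the $j=-i$ factor $\tfrac{2\ell_i+1}{2\ell_i}$, or in type $C$ converts it into $\tfrac{\ell_i+1}{\ell_i}$. At $\zeta=-\tfrac12$ the residue is $0$ in types $C,D$ and $1$ in type $B$.

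The gap is that all of these computations presuppose that $\ell_{\pm1},\dots,\ell_{\pm n}$ (and $\ell_0=0$ in type $B$) and $-\tfrac12$ are pairwise distinct, and you never check this. It is automatic in types $A_n$ and $C_n$, since the $\ell_i$ strictly decrease and $\ell_n\ge1$ for $C_n$. It fails, however, for many $\lambda\in\Lambda^{+}$ in types $B_n$ and $D_n$, including $\lambda=0$:
\begin{enumerate}
\item[(i)] In $B_n$ one has $\ell_n=\tfrac12(k_n+1)$, so $k_n=0$ gives $\ell_{-n}=-\tfrac12$.
\item[(ii)] In $D_n$ one has $\ell_n=\tfrac12(k_n-k_{n-1})$. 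This can equal $0$, in which case $\ell_n=\ell_{-n}$ and the bracket has a double pole at $\zeta=0$.
\item[(iii)] In $D_n$ it can also equal $\pm\tfrac12$, in which case one of $\ell_{\pm n}$ equals $-\tfrac12$.
\end{enumerate}
In these configurations your residue formula ``prefactor at $\ell_i$ times $\prod_{j\neq i}$'' becomes an indeterminate $0\cdot\infty$. In case (iii) your pairing argument at $\zeta=-\tfrac12$ becomes $0/0$.

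Worse, the residues you set out to verify are then not the right ones. For $D_n$ with $\ell_{\pm n}=-\tfrac12$, the pole at $-\tfrac12$ is a genuine pole of $\tfrac z2 G$, coming from $\mu=\pm e_n$, with residue $1$, not $0$. For $B_n$ with $k_n=0$, the residue at $-\tfrac12$ is $0$, not $m_{\varpi_1}(0)=1$. The reason is that $\lambda-e_n=s_n\CDOT\lambda$, so $D(\lambda-e_n)=-D(\lambda)$ and the $\mu=-e_n$ term cancels the zero-weight term.

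The closed forms do survive, but you need an argument for these cases. There are two ways to supply it.
\begin{enumerate}
\item[(a)] Recompute the residues in each degenerate configuration, as the paper does at the end of its proof. For example, when $\ell_n=\ell_{-n}=0$ both sides have a simple pole at $\zeta=0$ with residue $2R_{n-1}(\ell,0)$. When $k_n=0$ in $B_n$ both sides have residue $0$ at $\zeta=-\tfrac12$.
\item[(b)] Observe that your partial-fraction argument never uses dominance of $\ell$. It therefore proves that the right side of Corollary~\ref{cor:GenWkappa} equals your candidate for all $\ell$ in a dense open set, hence as rational functions of $(\ell,\zeta)$. Both sides are regular at every strongly dominant $\ell$ (where all $\ell\cdot\alpha\neq0$) for generic $\zeta$. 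So the identity specialises to the degenerate $\lambda$.
\end{enumerate}
Separately, in type $B$ you should record that the apparent pole at $\zeta=0$ from the $j=0$ factor $\tfrac{\zeta+1}{\zeta}$ is removed by the zero of the prefactor $\tfrac{2\zeta}{2\zeta+1}$.
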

\begin{proof}
To simplify our calculations, we initially assume that $\lambda$ is chosen so that $\ell_{- n},\ldots,\ell_{n}$ and $-\tfrac{1}{2}$ are all distinct. We will return to any exceptions to this restriction at the end of the proof. We now consider the four classical cases for $\mathfrak{g}$ in the order $A_{n}$, $D_{n}$, $C_{n}$ and $B_{n}$, since the solution methods will build on previous ones. We refer to Tables~\ref{tab:EucEmbed}, \ref{tab:g_data} and \ref{tab:pomega1_data} for relevant data.

\textbf{Case 1: $\mathfrak{g}=\mathfrak{sl}_{n+1}$.} 
For our choice of embedding,  in Table~\ref{tab:EucEmbed}, we have
\begin{align*}
	\Delta^{+}&=\lbrace e_{i}- e_{j}\hspace{0.5em}\vert\hspace{0.5em}1\leq i<j\leq n+1\rbrace,
	\\
	\Lambda(\varpi_{1})&=\lbrace e_{i}-\tfrac{1}{n+1}\mathbbold{1}_{n+1}\hspace{0.5em}\vert\hspace{0.5em}1\leq i\leq n+1\rbrace.
\end{align*}
It follows that $\ell\cdot\left(e_{i}-\tfrac{1}{n+1}\mathbbold{1}_{n+1}\right)=\ell_{i}$ as weights are embedded in the subspace of $\R^{n+1}$ orthogonal to $\mathbbold{1}_{n+1}$ and $\ell\cdot\alpha$ is of the form $\ell_{i}-\ell_{j}$  for any $\alpha\in\Delta^{+}$. This combined with Corollary~\ref{cor:GenWkappa} gives
\begin{align*}
	zG_{\mathfrak{sl}_{n+1}}^{\varpi_{1}}(\lambda,z) &
	=
	\sum_{i=1}^{n+1}\dfrac{1}{\zeta-\ell_{i}}\prod_{j=1,j\neq i}^{n+1}\dfrac{\ell_{i}+1-\ell_{j}}{\ell_{i}-\ell_{j}}
	=
	\sum_{i=1}^{n+1}\dfrac{1}{\zeta-\ell_{i}}
	\underset{\hspace{0.2em}\zeta=\ell_{i}}{\Res}\;f(\zeta),
%	\Res_{\zeta=\ell_{i}}
%	\left(\prod_{j= 1}^{n+1}\left(1+\dfrac{1}{\zeta-\ell_{j}}\right)-1\right).
\end{align*}
for 
\begin{align*}
	f(\zeta)=	\prod_{j= 1}^{n+1}\left(1+\dfrac{1}{\zeta-\ell_{j}}\right)-1,
\end{align*}
because $zG_{\mathfrak{sl}_{n+1}}^{\varpi_{1}}(\lambda,z)$ has distinct simple poles at $\zeta=\ell_{i}$ for $1\le i \le n+1$. 	 
Since $\lim_{\vert\zeta\vert\rightarrow\infty} f(\zeta)=0$,  
Cauchy's residue theorem implies $G_{\mathfrak{sl}_{n+1}}^{\varpi_{1}}(\lambda,z) =\tfrac{1}{z}f(\zeta)$. From Table~\ref{tab:pomega1_data} we find $\zeta=\tfrac{1}{z}+\tfrac{n}{2}$ so that 
\begin{align*}
	G_{\mathfrak{sl}_{n+1}}^{\varpi_{1}}(\lambda,z)= \dfrac{1}{z}\left(\prod_{i=1}^{n+1}\left(1+\dfrac{z}{1-(\ell_{i}-\tfrac{n}{2})z}\right)-1\right).
\end{align*}

For Cases 2--4 it is useful to define
\begin{align}\label{eq:Rn}
	R_{n}(\ell,\zeta):=\sideset{}{'}\prod_{j=\shortminus n}^{ n}\left(1+\dfrac{1}{\zeta-\ell_{j}}\right)
	=\prod_{j=1}^{ n}\frac{(\zeta+1)^{2}-\ell_{j}^{2}}{\zeta^{2}-\ell_{j}^{2}},
\end{align}
where the primed product excludes the $j=0$ index. By assumption, $R_{n}(\ell,\zeta)$ has $2n$ distinct simple poles at $\zeta=\ell_{\pm i}\neq -\tfrac{1}{2}$ for $1\le i\le n$. Furthermore, 
\begin{align*}
	\lim_{\vert\zeta\vert\rightarrow\infty}R_{n}(\ell,\zeta)=R_{n}\left(\ell,-\tfrac{1}{2}\right)=1.	
\end{align*}

\textbf{Case 2: $\mathfrak{g}=\mathfrak{so}_{2n}$.} For our chosen embedding 
\begin{align*}
	\Delta^{+}  &=\left\lbrace e_{i}\pm e_{j}\hspace{0.5em}\vert\hspace{0.5em} 1\leq i<j\leq n\right\rbrace,
	\\
	\Lambda(\varpi_{1})&=\left\lbrace\pm e_{i}\hspace{0.5em}\vert\hspace{0.5em} 1\leq i\leq n\right\rbrace.
\end{align*}
Hence $\ell\cdot \left(\pm e_{i}\right)=\ell_{\pm i}$ and $\ell\cdot\alpha$ is of the form $\ell_{i}\pm\ell_{j}$ for any $\alpha\in\Delta^{+}$.
 Corollary~\ref{cor:GenWkappa} gives us the expression
\begin{align*}
	\frac{z}{2}G_{\mathfrak{so}_{2n}}^{\varpi_{1}}(\lambda,z)
	 = 
	\sideset{}{'}\sum_{i=\shortminus n}^{n}\dfrac{1}{\zeta-\ell_{i}}
	\sideset{}{'}\prod_{\stackrel{j=\shortminus n}{j\neq \shortminus i,i}}^{n} 
	\dfrac{\ell_{i}+1-\ell_{j}}{\ell_{i}-\ell_{j}}
	= 
	\sideset{}{'}\sum_{i=\shortminus n}^{n}\dfrac{1}{\zeta-\ell_{i}}\underset{\hspace{0.2em}\zeta=\ell_{i}}{\Res}\;f(\zeta),
\end{align*}
for 
\begin{align}\label{eq:f_so2n}
f(\zeta)=\left(1-\dfrac{1}{2\zeta+1}\right)\left(R_{n}(\ell,\zeta)-1\right),
\end{align}
because $\tfrac{z}{2}G_{\mathfrak{so}_{2n}}^{\varpi_{1}}(\lambda,z)$ has $2n$ distinct simple poles at $\zeta =\ell_{\pm i}$ for $1\le i \le n$.
Since $f(\zeta)$ is regular at $\zeta=-\tfrac{1}{2}$  
and 
$\lim_{\vert\zeta\vert\rightarrow\infty}f(\zeta)=0$ we have  $\tfrac{z}{2}G_{\mathfrak{so}_{2n}}^{\varpi_{1}}(\lambda,z)=f(\zeta)$, by Cauchy's residue theorem. From Table~\ref{tab:pomega1_data} we find $\zeta=\tfrac{2}{z}+n-1$ so that
\begin{align*}
	G_{\mathfrak{so}_{2n}}^{\varpi_{1}}(\lambda,z) = \dfrac{2}{z}\left(1-\dfrac{z}{4+(2n-1)z}\right)\left(\sideset{}{'}\prod_{i=\shortminus n}^{n}\left(1+\dfrac{z}{2-(\ell_{i}-n+1)z}\right)-1\right).
\end{align*}

\textbf{Case 3: $\mathfrak{g}=\mathfrak{sp}_{2n}$.}  For our chosen embedding we have
\begin{align*}
	\Delta^{+} & =\left\lbrace e_{i}\pm e_{j},\hspace{0.35em}2e_{k}\hspace{0.5em}\vert\hspace{0.5em} 1\leq i<j\leq n,\hspace{0.35em}1\leq k\leq n\right\rbrace,
	\\
	\Lambda(\varpi_{1})&=\left\lbrace\pm e_{i}\hspace{0.5em}\vert\hspace{0.5em} 1\leq i\leq n\right\rbrace.
\end{align*}
Thus $\ell\cdot \left(\pm e_{i}\right)=\ell_{\pm i}$ and $\ell\cdot\alpha$ is either of the form $\ell_{i}\pm\ell_{j}$ or $2\ell_{k}$  for any $\alpha\in\Delta^{+}$. Then Corollary~\ref{cor:GenWkappa} gives us the expression
\begin{align*}
	\frac{z}{2}G_{\mathfrak{sp}_{2n}}^{\varpi_{1}}(\lambda,z)
	=
	\sideset{}{'}\sum_{i=\shortminus n}^{n}\dfrac{1}{\zeta-\ell_{i}}\left(\dfrac{\ell_{i}+1}{\ell_{i}}
	\sideset{}{'}\prod_{\stackrel{j=\shortminus n}{j\neq \shortminus i,i}}^{n}\dfrac{\ell_{i}+1-\ell_{j}}{\ell_{i}-\ell_{j}}\right)
	= 
	\sideset{}{'}\sum_{i=\shortminus n}^{n}\dfrac{1}{\zeta-\ell_{i}}\underset{\hspace{0.2em}\zeta=\ell_{i}}{\Res}\;f(\zeta),
\end{align*}
for 
\begin{align*}
f(\zeta)=	
\left(1+\dfrac{1}{2\zeta+1}\right)\left(R_{n}(\ell,\zeta)-1\right).
\end{align*}
Once again, $f(\zeta)$ is regular at $\zeta=-\tfrac{1}{2}$ and  $\lim_{\vert\zeta\vert\rightarrow\infty}f(\zeta)=0$ so by Cauchy's residue theorem, $G_{\mathfrak{sp}_{2n}}^{\varpi_{1}}(\lambda,z)=\tfrac{2}{z}f(\zeta)$. From Table~\ref{tab:pomega1_data} we find $\zeta=\tfrac{2}{z}+n$ so that
\begin{align*}
	G_{\mathfrak{sp}_{2n}}^{\varpi_{1}}(\lambda,z) = \dfrac{2}{z}\left(1+\dfrac{z}{4+(2n+1)z}\right)\left(\sideset{}{'}\prod_{i=\shortminus n}^{n}\left(1+\dfrac{z}{2-(\ell_{i}-n)z}\right)-1\right).
\end{align*}

\textbf{Case 4: $\mathfrak{g}=\mathfrak{so}_{2n+1}$.} For our chosen embedding we have
\begin{align*}
	\Delta^{+} & =\left\lbrace e_{i}\pm e_{j},\hspace{0.35em}e_{k}\hspace{0.5em}\vert\hspace{0.5em} 1\leq i<j\leq n,\hspace{0.35em}1\leq k\leq n\right\rbrace,\\
	\Lambda(\varpi_{1})&=\left\lbrace\pm e_{i}\hspace{0.5em}\vert\hspace{0.5em} 1\leq i\leq n\right\rbrace\cup\left\lbrace0\right\rbrace.
\end{align*}
Thus  $\ell\cdot \left(\pm e_{i}\right)=\ell_{\pm i}$,  $\ell\cdot0=0=\ell_{0}$ and  $\ell\cdot\alpha$ is either of the form $\ell_{i}\pm\ell_{j}$ or $\ell_{k}$  for any $\alpha\in\Delta^{+}$. Applying Corollary~\ref{cor:GenWkappa} and noting  that $\tfrac{1}{2}\varpi_{1}^{2}=\tfrac{1}{2}$ and $m_{\varpi_{1}}(0)=1$, we obtain an expression similar  to Case~3 but with an adjustment accounting for the $0$ weight. We therefore find that
\begin{align*}
	\frac{z}{2}G_{\mathfrak{so}_{2n+1}}^{\varpi_{1}}(\lambda,z)-\dfrac{1}{\zeta+\tfrac{1}{2}}
	&=
	\sideset{}{'}\sum_{i=\shortminus n}^{n}\dfrac{1}{\zeta-\ell_{i}}\left(\dfrac{\ell_{i}+1}{\ell_{i}}
	\sideset{}{'}\prod_{\stackrel{j=\shortminus n}{j\neq \shortminus i,i}}^{n}\dfrac{\ell_{i}+1-\ell_{j}}{\ell_{i}-\ell_{j}}\right)
	=\sideset{}{'}\sum_{i=\shortminus n}^{n}\dfrac{1}{\zeta-\ell_{i}}\underset{\hspace{0.2em}\zeta=\ell_{i}}{\Res}\;h(\zeta),
\end{align*}
for 
\begin{align}\label{eq:h_zeta}
	h(\zeta)=\left(1+\dfrac{1}{2\zeta+1}\right)\left(R_{n}(\ell,\zeta)-1\right).
\end{align}
Then $h(\zeta)$ is regular at $\zeta=-\tfrac{1}{2}$ and  $\lim_{\vert\zeta\vert\rightarrow\infty}h(\zeta)=0$ so by Cauchy's residue theorem, we find $\tfrac{z}{2}G_{\mathfrak{so}_{2n+1}}^{\varpi_{1}}(\lambda,z) = f(\zeta)$ where
\begin{align*}
f(\zeta)=&\dfrac{1}{\zeta+\tfrac{1}{2}}+h(\zeta)= \dfrac{1}{\zeta+\tfrac{1}{2}}\left( 1 + (\zeta+1)\left(\sideset{}{'}\prod_{j=\shortminus n}^{n}\left(1+\dfrac{1}{\zeta-\ell_{j}}\right)-1\right)\right)
	\\
	=& \dfrac{1}{\zeta+\tfrac{1}{2}}\left(\zeta \prod_{j=\shortminus n}^{n}\left(1+\dfrac{1}{\zeta-\ell_{j}}\right) -\zeta  \right)
	\\
	=&\left(1-\dfrac{1}{2\zeta+1}\right)\left(\prod_{j=\shortminus n}^{n}\left(1+\dfrac{1}{\zeta-\ell_{j}}\right)-1\right),
\end{align*}
where the unprimed product includes the $j=0$ index.
 $f(\zeta)$ has $2n+1$ simple poles at $\zeta=-\tfrac{1}{2}$ and $\zeta=\ell_{i}$ for $i\neq 0$.
 From Table~\ref{tab:pomega1_data} we find $\zeta=\tfrac{2}{z}+n-\tfrac{1}{2}$ so that
\begin{align*}
	G_{\mathfrak{so}_{2n+1}}^{\varpi_{1}}(\lambda,z) &= \dfrac{2}{z}\left(1-\dfrac{z}{4+2nz}\right)\left(\prod_{i=\shortminus n}^{n}\left(1+\dfrac{z}{2-(\ell_{i}-n+\tfrac{1}{2})z}\right)-1\right).
\end{align*}

We now consider when $\ell_{- n},\ldots,\ell_{n}$ and $-\tfrac{1}{2}$ are not all distinct. Let $\lambda=\sum_{i=1}^{n}k_{i}\varpi_{i}\in \Lambda^{+}$ so that $k_{1},\ldots,k_{n}$ are non-negative integers.   
From Table~\ref{tab:EucEmbed} for the classical Lie algebras we find that $\lambda$ has the Euclidean components shown in Table~\ref{tab:lambda_i} e.g. \cites{Hu1,Kn}. 

\begin{table}[h!]
	{\setlength{\extrarowheight}{3pt}\begin{tabular}{|@{\hspace{1em}}c@{\hspace{1em}}|@{\hspace{1em}}l@{\hspace{1em}}|}\hline
			$\mathfrak{g}$ & $\lambda$ Components\\ \hline\hline
			$\mathfrak{sl}_{n+1}$ & $\lambda_{i}=\sum_{j=i}^{n}k_{j}$  for $1\le i \le n$   and $\lambda_{n+1}=-\sum_{j=1}^{n}jk_{j}$
			\\ \hline
			$\mathfrak{so}_{2n+1}$ & $\lambda_{i}=\sum_{j=i}^{n-1}k_{j}+\tfrac{1}{2}k_{n}$ for $1\le i \le n$
			\\ \hline
			$\mathfrak{sp}_{2n}$ & $\lambda_{i}=\sum_{j=i}^{n}k_{j}$ for $1\le i \le n$
			\\ \hline
			$\mathfrak{so}_{2n}$ & $\lambda_{i}=\sum_{j=i}^{n-2}k_{j}+\tfrac{1}{2}\left(k_{n}+k_{n-1}\right)$  for $1\le i \le n-1$ and 
			$\lambda_{n}=\tfrac{1}{2}\left(k_{n}-k_{n-1}\right)$
			\\ \hline
	\end{tabular}}
	\caption{Euclidean components of $\lambda\in\Lambda^{+}$.}\label{tab:lambda_i}
\end{table}
In particular,  we find that $\lambda_{i}\ge \lambda_{i+1}$ for $1\le i\le n-1$. Since $\rho_{i}>\rho_{i+1}$ we have $\ell_{i}>\ell_{i+1}$ for any classical Lie algebra.
For $\mathfrak{g}=\mathfrak{sl}_{n+1}$, this is sufficient to guarantee all  $\ell_{i}$ are distinct since $\ell_{n+1}<0$. For $\mathfrak{g}=\mathfrak{sp}_{2n}$, we find  $\ell_{1}>\ldots>\ell_{n}>-\tfrac{1}{2}>\ell_{\shortminus n}>\ldots >\ell_{\shortminus 1}$ since $\ell_{n}\geq 1$.  So our initial assumption automatically always holds in Cases~1 and 3.

For $\mathfrak{g}=\mathfrak{so}_{2n+1}$, if $k_{n}=0$ then $\ell_{- n},\ldots,\ell_{n}$ are distinct but $\ell_{n}=\tfrac{1}{2}$ and $\ell_{- n}=-\tfrac{1}{2}$. Repeating the analysis of Case~4, we find $	\tfrac{z}{2}G_{\mathfrak{so}_{2n+1}}^{\varpi_{1}}(\lambda,z)-\left(\zeta+\tfrac{1}{2}\right)^{-1}$  now has an additional simple pole at $\zeta=-\tfrac{1}{2}$ with residue $-1$. 
Furthermore, $h(\zeta)$ of \eqref{eq:h_zeta} is given by 
\begin{align*}
h(\zeta)=\dfrac{\zeta+1}{\zeta+\tfrac{1}{2}}\left(\dfrac{\zeta+\tfrac{3}{2}}{\zeta-\tfrac{1}{2}}
\; R_{n-1}(\ell,\zeta)
-1\right),
\end{align*}
which similarly has a simple pole at $\zeta=-\tfrac{1}{2}$ with residue $-1$. Thus the same closed expression holds for $G_{\mathfrak{so}_{2n+1}}^{\varpi_{1}}(\lambda,z)$ which, in this case, has $2n-1$ simple poles at $\zeta=\ell_{i}$ for all $i\neq 0,-n$.

For  $\mathfrak{g}=\mathfrak{so}_{2n}$, if $(k_{n-1},k_{n})=(0,0)$, then  $\ell_{n}=\ell_{- n}=0$. Thus Corollary~\ref{cor:GenWkappa} implies $\tfrac{z}{2}G_{\mathfrak{so}_{2n}}^{\varpi_{1}}(\lambda,z)$ has $2n-1$ simple poles at $\zeta=\ell_{i}$ for $i\neq \pm n$ and  at $\zeta=\ell_{\pm n}=0$ with  residue
\begin{align*}
	\underset{\hspace{0.1em}\zeta=0}{\Res}\hspace{0.2em} \frac{z}{2}G_{\mathfrak{so}_{2n}}^{\varpi_{1}}(\lambda\hspace{0.125em},\hspace{0.125em}z) 
	=2\sideset{}{'}\prod_{i=1\shortminus n}^{n\shortminus1}\left(1-\dfrac{1}{\ell_{i}}\right).
\end{align*}
From the Case~2 analysis we find that with $\ell_{n}=\ell_{- n}=0$ then $f(\zeta)$ of \eqref{eq:f_so2n} is given by
\begin{align*}
f(\zeta)=&
\dfrac{(\zeta+1)^{2}}{\zeta\left(\zeta+\tfrac{1}{2}\right)}\;R_{n-1}(\ell,\zeta)-\dfrac{\zeta}{\zeta+\tfrac{1}{2}},
\end{align*} 
with the same $\zeta=0$ residue $2R_{n-1}(\ell,0)$ as $\tfrac{z}{2}G_{\mathfrak{so}_{2n}}^{\varpi_{1}}(\lambda,z) $. Thus the same closed form holds for $G_{\mathfrak{so}_{2n}}^{\varpi_{1}}(\lambda,z)$ which, in this case, has $2n-1$ distinct simple poles at $\zeta =\ell_{ i}$ for $ i \neq \pm n$ and $\zeta=\ell_{\pm n}=0$.

Lastly, if $(k_{n-1},k_{n})=(1,0)$ or $(0,1)$  then $\ell_{n} =-\ell_{-n}= \pm\tfrac{1}{2}$.  For $\ell_{n}=-\tfrac{1}{2}$ we find from Corollary~\ref{cor:GenWkappa} that $\tfrac{z}{2}G_{\mathfrak{so}_{2n}}^{\varpi_{1}}(\lambda,z) $ has a simple pole at $\zeta=-\tfrac{1}{2}$ with residue $1$. 
%\begin{align*}
%\prod_{j=1-n}^{n-1} 
%	\dfrac{-\tfrac{1}{2}+1-\ell_{j}}{-\tfrac{1}{2}-\ell_{j}}=1.
%\end{align*}
Furthermore, $f(\zeta)$ of \eqref{eq:f_so2n} is given by
\begin{align*}
	f(\zeta) &=\dfrac{\zeta}{\zeta+\tfrac{1}{2}}\left(\dfrac{\zeta+\tfrac{3}{2}}{\zeta-\tfrac{1}{2}}\hspace{0.35em}R_{n-1}(\ell,\zeta)-1\right),
\end{align*}
which also has a simple pole at $\zeta=-\tfrac{1}{2}$ with residue $1$.  The analysis for when $\ell_{-n}=-\tfrac{1}{2}$ is precisely the same. Thus the same closed formula for $G_{\mathfrak{so}_{2n}}^{\varpi_{1}}(\lambda,z)$ holds again with $2n$ distinct simple poles at $\zeta =\ell_{\pm i}$ for $1\le i \le n$ where either $\ell_{n}=-\tfrac{1}{2}$ or $\ell_{\shortminus n}=-\tfrac{1}{2}$.
Altogether, we have shown the given closed form expressions for the classical Lie algebras hold  in all cases.
\end{proof}
The generating function expressions of Corollary~\ref{cor:GenPomega1} agree with the results of Perelomov and Popov \cite{PP} and Okubo \cite{Ok} once appropriate adjustments are made for the definition of the generalised Casimir operators and the choice of minimal irreducible representation for $A_{n}$.  Our definition of the generalised Casimir $K_{p}^{\kappa}$ of \eqref{GeneralisedCasimir} is precisely that of  Eqn~(2.13) of Okubo \cite{Ok}. Furthermore, for $i=1,\ldots,n$ we find  $\ell_{i}=l_{i}$ of \cite{Ok} for $\mathfrak{g}$ of type $B_{n},C_{n}$ and $D_{n}$ and that our Casimir eigenvalues $c_{p}^{\varpi_{1}}(\lambda)$ agree with those of Okubo. From \eqref{eq:Rn} it is clear the $R_{n}(\ell,\zeta)$ is symmetric in $\ell_{1}^{2},\ldots,\ell_{n}^{2}$. Hence the Casimir  eigenvalue $c_{p}^{\varpi_{1}}(\lambda)$ is a symmetric polynomial of degree $\lfloor \tfrac{p}{2}\rfloor$ in $\ell_{1}^{2},\ldots,\ell_{n}^{2}$, as noted by Okubo.

For $\mathfrak{g}$ of type $A_{n}$, Okubo chooses   $\kappa=\varpi_{n}=-e_{n+1}+\tfrac{1}{n+1}\mathbbold{1}_{n+1}$ of minimal dimension $n+1$ for which
\begin{align*}
	\Lambda(\varpi_{n})&=\lbrace -e_{i}+\tfrac{1}{n+1}\mathbbold{1}_{n+1}\hspace{0.5em}\vert\hspace{0.5em}1\leq i\leq n+1\rbrace.
\end{align*}
Thus $\ell\cdot \mu=-\ell_{i}$ for $\mu\in\Lambda(\varpi_{n})$. Hence our $A_{n}$ Casimir invariants $c_{p}^{\varpi_{1}}(\lambda)$ agree with those of Okubo  for $\ell_{i}=-l_{i}$  for $i=1,\ldots,n+1$. We also see directly from $G_{\mathfrak{sl}_{n+1}}^{\varpi_{1}}(\lambda,z) $ that the Casimir  eigenvalue $c_{p}^{\varpi_{1}}(\lambda)$ is a symmetric polynomial of degree $p$ in $\ell_{1},\ldots,\ell_{n+1}$.

In order to compare the   generating functions of Corollary~\ref{cor:GenPomega1} to those of Perelomov and Popov \cite{PP} it is useful to express them in a universal way. 
We firstly let  $\{j\}$ denote the product indexing set for the given generating functions e.g. for $\mathfrak{sp}_{2n}$ then $\{j\}=\{\pm 1,\ldots,\pm n\}$ of cardinality $2n$. In general, we note that  $\{j\}$ is of cardinality $D(\varpi_{1})$.  
Let\footnote{The $-rI(\varpi)$ parameter reflects another normalisation choice of the bilinear form on $\mathfrak{h}^{*}$.} $\mathfrak{z}:=-\tfrac{z}{rI(\varpi_{1})}$ so that 
$\zeta=-\mathfrak{z}^{-1}+\rho_{1}$ observing that $\kappa\cdot \rho=\rho_{1}$ for $\kappa=\varpi_{1}$ for each classical algebra. Then Corollary~\ref{cor:GenPomega1} implies
\begin{align}
	G_{\mathfrak{g}}^{\varpi_{1}}(\lambda,z)=&\left(
	1-\frac{b}{2\zeta+1} \right)
	\frac{1}{\mathfrak{z}}
	\left( 1-P (\mathfrak{z})\right)
	\notag
	\\
	\label{eq:GenUniv}
		=&\left(
	1+\frac{b\,\mathfrak{z}}{2 -(2\rho_{1}+1)\mathfrak{z}} \right)
	\frac{1}{\mathfrak{z}}
	\left( 1- P (\mathfrak{z}) \right),
\end{align}
for  
\begin{align*}
	P(\mathfrak{z}):=\prod_{\{j\}}\left(1+\frac{1}{\zeta-\ell_{j}} \right)
	=\prod_{\{j\}}\left(1-\frac{\mathfrak{z}}{1+\left(-\ell_{j}+\rho_{1}\right)\mathfrak{z}  } \right),
\end{align*}
where $b=0,1,-1,1$ for  $\mathfrak{g}$ of type $A_{n}, B_{n}, C_{n}, D_{n}$ respectively. Since $\sum_{\{j\}}\ell_{j}=0$ in all cases, we find \eqref{eq:GenUniv} implies
\begin{align*}
	G_{\mathfrak{g}}^{\varpi_{1}}(\lambda,z)=D(\varpi_{1})\left(1 +\left(\frac{b}{2} - \frac{\left(D(\varpi_{1})-1\right)}{2}+\rho_{1}\right)\mathfrak{z} +O(\mathfrak{z}^2)\right).
\end{align*}
Following \eqref{GeneralisedCasimir} we know that $G_{\mathfrak{g}}^{\varpi_{1}}(\lambda,z)=D(\varpi_{1})+O(\mathfrak{z}^2)$ so that
\begin{align*}
	b= D(\varpi_{1})-1-2\rho_{1}.
\end{align*}
It is easy to confirm that $b=0,1,-1$ or $1$ from Table~4. Comparing to Perelomov and Popov's results  we find that their parameters $\alpha$ and $\beta$ of Table~1 of \cite{PP} are in fact given by $\alpha=\rho_{1}$ and $\beta=b=D(\varpi_{1})-1-2\rho_{1}$. Finally, their generating function, Eqn~(75) of \cite{PP}, is given by \eqref{eq:GenUniv} for $\mathfrak{g}$ of type $B_{n}, C_{n}$ and $D_{n}$ (using $\ell_{-j}=-\ell_{j}$) whereas for $A_{n}$ one again requires use of the minimal fundamental representation $\varpi_{n}$.

The Perelomov and Popov results were  computed from a triagonalisation of representation matrices and the question of a more comprehensive description or what other representations could admit such a generating function description of their Casimir invariants was left to conjecture. Okubo's formula does provide a more comprehensive description, connecting the terms of the generating function to the irreducible components of a tensor representation. Further, Okubo specifically describes the eigenvalues of the generalised Casimir operators associated with the first fundamental representations of classical Lie algebras as admitting convenient formulae \cite{Ok}. Theorem~\ref{theor:CasGen} adds more context to the underlying mechanism, showing that these convenient formulae were a result of the $\lambda+\mu$ terms all being either dominant or not linked to a dominant element, in the case of $\kappa=\varpi_{1}$ for classical Lie algebras. This is effectively because for any choice of $\lambda$, the weights $\mu\in\Lambda(\varpi_{1})$ are short enough that the $\lambda+\mu$ terms cannot be far enough away from $\lambda$ to be a non-dominant element linked to a dominant one - and thus contribute an additional term to the computation as we see via Klimyk's formula.
The Euclidean embeddings chosen in Table~\ref{tab:EucEmbed} also play a crucial role for the classical Lie algebras in that $\varpi=e_{1}$ (or its projection for $\mathfrak{g}=\mathfrak{sl}_{n+1}$). Since the Weyl group $W$ acts by coordinate permutations and possible sign changes, the $W$ action on $e_{1}$ generates (projected) elements of $\{\pm e_{i}\}_{i=1}^{n}$. Hence,  for the classical algebras, 
$zG_{\mathfrak{g}}^{\varpi_{1}}(\lambda,z)$ has simple poles in $\zeta=\ell\cdot \mu\in \{\pm \ell_{i}\}_{i=1}^{n}$ for all $\mu\in W(\varpi_{1})$. This may be why no closed generating function formulas are known for the exceptional Lie algebras - at least for standard Euclidean embeddings.

%Another contributor to the expressions we see in Corollary $3.10$ is the particular choice of embedding. It is somewhat of a coincidence that under the conventional choices of embeddings for the root spaces of classical Lie algebras we see that the standard coordinates of $\ell=\lambda+\rho$ fit into concise product expressions for the generating functions of $c_{p}^{\varpi_{1}}(\lambda)$. It is possible that alternate choices of embedding may admit useful generating functions for $c_{p}^{\varpi_{1}}$ over the exceptional Lie algebras or for $c_{p}^{\varpi_{n\shortminus1}}(\lambda)$ or $c_{p}^{\varpi_{n}}(\lambda)$ over Lie algebras of type $D_{n}$.

\section{Conclusion}\label{sec:Conc}
Theorem~\ref{theor:CasGen} establishes a means of calculating generalised Casimir operators associated with $V(\kappa)$ for any finite-dimensional, simple, complex Lie algebra using only knowledge of $\Delta^{+}$ and $\Lambda(\kappa)$, which is computationally advantageous for $\Lambda(\kappa)$ small. The resulting generating functions have been shown to be equivalent to established closed form formulae as in Corollary~\ref{cor:GenPomega1}. It is known that, with the exception of Lie algebras of type $D_{n}$, that knowledge of these operators' action on $V(\lambda)$ for arbitrary $\lambda$ is sufficient to describe the action of $Z(U(\mathfrak{g}))$ on any irreducible representation of $\mathfrak{g}$ (see Table ~\ref{tab:ZU_Gens}).
We also note the development in \cite{MSV} of a universal Casimir eigenvalue generating function $G_{\mathfrak{g}}^{\ad}(\ad,z)$ for the adjoint representation $\ad$ and simple $\mathfrak{g}$ expressed in terms of Vogel parameters \cite{Vo}.  

It remains to be determined if there is a closed form expression for the generating functions  $G^{\varpi_{n-1}}_{\mathfrak{so}_{2n}}(\lambda,z)$ or $G^{\varpi_{n}}_{\mathfrak{so}_{2n}}(\lambda,z)$. As the weights of $\Lambda(\varpi_{n-1})$ and $\Lambda(\varpi_{n})$ are not distributed as neatly over the standard basis vectors in our embedding as with $\Lambda(\varpi_{1})$ there may be some utility in exploring alternate embeddings for the roots of $\mathfrak{so_{2n}}$. 
We can readily compute $G_{\mathfrak{g}}^{\varpi_{1}}(\lambda,z)$ for the exceptional Lie algebras \cite{Fl} but there are no known closed form expressions which would also be worth investigating. The first fundamental representations of $E_{8}$ and $F_{4}$ are ``non-degenerate'' in the sense that they each have weights (specifically the weight $0$) with multiplicity greater than $1$, which may interfere or prevent finding such closed form expressions as speculated by Perelomov and Popov \cite{PP}.

It is known that many of the fundamental representations of the simple Lie algebras - though not all - can be derived from the first fundamental representation of that Lie algebra as exterior products or their decompositions, having their algebraic information entirely determined by the first fundamental representation. It stands to reason then, that it may be possible to derive a relationship between the generating functions of the first fundamental representations and their dependent fundamental representations. However, the same issues with these representations being ``non-degenerate" may interfere.

It is also of note that the $\tfrac{D(\lambda+\mu)}{D(\lambda)}$ terms  - with $D(\cdot)$ as in \eqref{eq:D_defn} - which appear in our calculations for the classical Lie algebras contain parts that can be viewed as ratios of Vandermonde determinants as a consequence of the underlying structure of the roots with respect to the choice of embedding. Specifically, for Lie algebras of type $A_{n}$ we see that the ratio of products appears as
\begin{align*}
	\smashoperator[r]{\prod_{1
			\leq i<j\leq n+1}}\hspace{0.8em}\frac{(\ell+\mu)\cdot(e_{i}-e_{j})}{\ell\cdot(e_{i}-e_{j})}\,=\smashoperator[r]{\prod_{1\leq i<j\leq n+1}}\hspace{0.8em}\frac{\ell_{i}+\mu_{i}-(\ell_{j}+\mu_{j})}{\ell_{i}-\ell_{j}},
\end{align*}
while for classical Lie algebras of other types we have the following pattern within each of their respective $\tfrac{D(\lambda+\mu)}{D(\lambda)}$ terms
\begin{align*}
	\smashoperator[r]{\prod_{1\leq i<j\leq n}}\hspace{0.7em}\frac{(\ell+\mu)\cdot(e_{i}-e_{j})}{\ell\cdot(e_{i}-e_{j})}\frac{(\ell+\mu)\cdot(e_{i}+e_{j})}{\ell\cdot(e_{i}+e_{j})}\,=\smashoperator[r]{\prod_{1\leq i<j\leq n}}\hspace{0.7em}\frac{(\ell_{i}+\mu_{i})^{2}-(\ell_{j}+\mu_{j})^{2}}{\ell_{i}^{2}-\ell_{j}^{2}}.
\end{align*}
This provides another avenue for deeper inspection, either to better explicitly understand the underlying mechanics that result in these patterns or in pursuit of generating function expressions that could produce these patterns when expanded as a sum over $\Lambda(\kappa)$.

\end{document}